\documentclass[12pt]{amsart}
\usepackage{amssymb}
\usepackage{amsmath}
\usepackage{amsthm}
\usepackage{graphicx}
\usepackage{xcolor}
\usepackage{hyperref}
\usepackage{moresize}
\usepackage{cancel}
\usepackage{bbm}
\usepackage{fullpage}
\newtheorem{theorem}{Theorem}[section]

\newtheorem{lemma}[theorem]{Lemma}
\newtheorem{remark}[theorem]{Remark}
\newtheorem{definition}{Definition}[section]
\numberwithin{equation}{section}

\newcommand{\dx}{\mathrm{d}}
\newcommand{\e}{\mathrm{e}}

\newcommand{\sumast}{\sideset{}{^*}\sum}
\newcommand{\sumtimes}{\sideset{}{^\times}\sum}

\allowdisplaybreaks

\begin{document}

\title{\small{THE AVERAGE NUMBER OF REPRESENTATIONS OF AN INTEGER AS A SUM OF THREE OR MORE PRIMES, OVER MULTIPLES OF A FIXED INTEGER}}
\author{Alessandra Migliaccio and Alessandro Zaccagnini}
\date{\today}

\subjclass[2020]{Primary 11P32. Secondary 11P55, 11P05}
\keywords{Additive problems; Goldbach-type problems}

\begin{abstract}
We extend a result by Ikeda and Suriajaya (2025) to the average number of representations of an integer as a sum of an arbitrary number $j\ge 3$ of primes. We show that the corresponding asymptotic formula holds in the general case. In the proof we use new ingredients like averages for the singular series of the generalized Goldbach problem.
\end{abstract}

\maketitle

\section{Introduction}\label{sec1}
\noindent As in our previous paper \cite{mig25}, we delve into another generalization of Ikeda and Suriajaya's results in \cite{ike25}. For a fixed $j\ge 2$, we introduce the function
\begin{equation}
\label{psi-j}
\psi_{j}(n):=\sum_{n=m_{1}+\dots+m_{j}} \Lambda(m_{1})\dots\Lambda(m_{j}).
\end{equation}
We are looking for an asymptotic estimate of
\begin{equation}
\label{def-G}
G_{q,j}(N):=\sum_{\substack{n\le N \\ q\mid n}} \psi_{j}(n),
\end{equation}
for fixed $q\ge 1$.
It is well known that $G_{1, j}(N) \sim N^j / j!$ for fixed $j \ge 2$, and more precise results have been proved for larger $j$, in particular when the Riemann Hypothesis (RH) is assumed.
For a fixed positive integer $q$ we may expect that 
\[
G_{q,j}(N) = \frac{C_{q, j}}{\phi(q)} \, G_{1,j}(N) + \mathcal{O}_{q,j}(N^{\beta}\log^\delta N),
\]
for some $\beta < j$, where $C_{q, j}$ is a constant, which is positive unless $q$ is even and $j$ is odd.
See Remark \ref{cost=0} below.

\begin{definition}
From now on, by GRH we mean that $\mathfrak{R}(\rho)=1/2$, for every non-trivial zero $\rho$ of the Dirichlet L-function $L(s,\chi)$, where $\chi$ mod $q$ is a primitive character or is induced by a primitive character $\chi_{1}$ mod $q_{1}$, s.t. $q_{1}\mid q$.   
\end{definition}

\begin{remark}
GRH will be required both for the study of the main term and the secondary term, (see \S\ref{subsecI1}-\ref{subsecI2}). 
\end{remark}

\begin{remark}
In the following, all the implicit constants in the $\mathcal{O}$ or $\mathcal{O}_{q}$ notation may depend on $j$.    
\end{remark}

\begin{theorem}
\label{th1}
Assume GRH for Dirichlet L-functions $L(s,\chi)$ associated with characters $\chi\bmod q$, for a fixed $q\ge 1$. Then, for every $j\ge 3$, we have
\[
G_{q,j}(N)=\frac{C_{q,j}}{\phi(q)}\cdot G_{1,j}(N)+\mathcal{O}_{q}(N^{j-1}\log^2 N),
\]
where 
\begin{equation}
\label{def-Cqj}
C_{q,j}:=\frac{\phi_{j-1}(q)}{\phi^{j-1}(q)}    
\end{equation}
is a positive constant, unless both $j-1$ and $q$ are even, and $\phi_{j}(q)$ is the generalized Euler function.
\end{theorem}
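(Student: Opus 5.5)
We will detect the divisibility condition $q\mid n$ with additive characters and compute through exponential sums. Write $S(\alpha):=\sum_{m\le N}\Lambda(m)e(m\alpha)$. Since every $n\le N$ involves only $m_i\le N$,
\[
G_{q,j}(N)=\frac1q\sum_{a=1}^{q}\ \sum_{n\le N}\psi_j(n)\,e\Bigl(\frac{an}{q}\Bigr)
=\frac1q\sum_{a=1}^{q}\ \sum_{\substack{m_1+\dots+m_j\le N}}\Lambda(m_1)\cdots\Lambda(m_j)\,e\Bigl(\frac{a(m_1+\dots+m_j)}{q}\Bigr).
\]
Group the terms $a/q=b/r$ in lowest terms, with $r\mid q$ and $(b,r)=1$. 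For each such $r$, split the primes $m_i$ into residue classes mod $r$. Prime powers and primes dividing $r$ contribute only $\mathcal O_q(\sqrt N)$ per variable, hence $\mathcal O_q(N^{j-1/2})$ overall after the trivial count of the remaining variables. We must be careful here, since $N^{j-1/2}$ exceeds $N^{j-1}\log^2N$. We therefore keep prime powers inside $\Lambda$, and remove only the finitely many $m_i$ with $(m_i,r)>1$, namely the powers of primes dividing $q$, of which there are $\mathcal O_q(\log^2N)$ terms of weight at most $\log N$. This costs $\mathcal O_q(N^{j-1}\log^2N)$ after summing the other $j-1$ variables trivially.

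The main step is to write, for $(m,r)=1$,
\[
e(bm/r)=\frac{1}{\phi(r)}\sum_{\chi\bmod r}\bar\chi(b)\,\tau(\chi)\,\chi(m)\cdots
\]
Equivalently, we use $\psi(x;r,c)=x/\phi(r)+E(x;r,c)$ with $E(x;r,c)\ll_q \sqrt x\log^2 x$ under GRH, applied to the characters mod $r\mid q$ (which is exactly the GRH hypothesis stated). Each weight $\Lambda(m_i)\mathbbm 1_{m_i\equiv c_i\,(r)}$ is then replaced by $\frac{1}{\phi(r)}$ plus an error. The full main term is
\[
\frac1q\sum_{r\mid q}\frac{1}{\phi(r)^j}\sum_{(b,r)=1}\Bigl(\sum_{(c,r)=1}e(bc/r)\Bigr)^j\cdot\#\text{-weighted volume}=\frac1q\sum_{r\mid q}\frac{\mu(r)^j\phi(r)}{\phi(r)^j}\,\frac{N^j}{j!}(1+o(1)),
\]
because the Ramanujan sum $c_r(b)=\mu(r)$ for $(b,r)=1$. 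The arithmetic factor $\frac1q\sum_{r\mid q}\mu(r)^j\phi(r)^{1-j}$ is multiplicative. At $p^k\| q$ it equals $p^{-k}\bigl(1+(-1)^j(p-1)^{1-j}\bigr)$, and one checks this equals $\phi_{j-1}(q)/(\phi(q)\phi^{j-1}(q))$ with the generalized Euler function $\phi_{j-1}(p^k)=p^{k(j-1)}\prod(1-(-1)^{j-1}\ldots)$. It vanishes exactly when $p=2$ and $j-1$ is even. Rather than computing the volume directly, we replace it by $G_{1,j}(N)$ itself: the $r=1$ term is exactly $G_{1,j}(N)/q$, and for $r>1$ the same decomposition compares to $G_{1,j}$ up to the error terms below.

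The error terms come from expanding the product of $j$ factors $(\frac{1}{\phi(r)}\text{main}+E)$. We handle them by partial summation in one variable at a time, and this is where we expect the real work. A term containing one error factor in variable $m_1$ gives
\[
\sum_{m_2+\dots+m_j\le N}\Lambda(m_2)\cdots\Lambda(m_j)\,E\bigl(N-m_2-\dots-m_j;r,c\bigr)\ll_q N^{j-1}\sqrt N\log^2N.
\]
This is too weak by $\sqrt N$, which is the main obstacle. To overcome it, we write the problem in the $j$-fold convolution form $\sum_{m\le N}\Lambda(m)\chi(m)F_{j-1}(N-m)$, with $F_{j-1}$ the $(j-1)$-fold count. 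We then use the explicit formula for $\psi(x,\chi)$ under GRH, so that the error becomes $-\sum_\rho x^\rho/\rho$. Integrating against the smooth main part $x^{j-1}/(j-1)!$ yields terms $\sum_\rho N^{\rho+j-1}/(\rho(\rho+1)\cdots(\rho+j-1))$, which converge absolutely and are $\ll N^{j-1/2}$. Iterating with the $E\cdot E$ cross terms, bounded via $\int|E|^2\ll N^2\log^2N$, should reduce everything to $\mathcal O_q(N^{j-1}\log^2N)$ provided we compare with $G_{1,j}(N)$ rather than with $N^j/j!$. The comparison is arranged so that the $\rho$-sums for the trivial character cancel against those inside $G_{1,j}$. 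We expect this cancellation, together with getting the exact $\log^2N$ power, to be the hardest part; we would first try to show the $\chi\ne\chi_0$ zero sums are killed by the orthogonality sum over $b$ together with $|\tau(\chi)|\le\sqrt r$, leaving only principal-character contributions proportional to $G_{1,j}$.
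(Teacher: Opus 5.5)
Your constant is right, and your route is genuinely different from the paper's. You have $\frac1q\sum_{r\mid q}\mu(r)^j\phi(r)^{1-j}=\frac1q\prod_{p\mid q}\bigl(1+(-1)^j(p-1)^{1-j}\bigr)=C_{q,j}/\phi(q)$. The paper instead detects $q\mid n$ with multiplicative characters applied to $m_1+\dots+m_{j-1}$ and $m_j$, which bundles $j-1$ variables into $\widetilde\Psi_{j-1}(z,\chi)$.

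Expand $e(bm/r)=\phi(r)^{-1}\sum_{\chi\bmod r}\chi(bm)\tau(\bar\chi)$ for $(m,r)=1$ directly, rather than as ``density $1/\phi(r)$ plus $E(x;r,c)$''. The sum over $b$ then gives
\[
\sum_{(b,r)=1}\ \sum_{\substack{m_1+\dots+m_j\le N\\ (m_i,r)=1}}\ \prod_{i=1}^{j}\Lambda(m_i)e\Bigl(\frac{bm_i}{r}\Bigr)=\phi(r)^{1-j}\sum_{\chi_1\cdots\chi_j=\chi_0}\tau(\bar\chi_1)\cdots\tau(\bar\chi_j)\,T(\chi_1,\dots,\chi_j),
\]
with $T(\chi_1,\dots,\chi_j)=\sum_{m_1+\dots+m_j\le N}\prod_i\Lambda(m_i)\chi_i(m_i)$. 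The all-principal tuple contributes $\phi(r)^{1-j}\mu(r)^j\bigl(G_{1,j}(N)+\mathcal{O}_q(N^{j-1}\log N)\bigr)$, so no cancellation of $\zeta$-zero sums has to be arranged.

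The gap is in what else survives. Orthogonality in $b$ kills only tuples whose product is nonprincipal, in particular those with exactly one nonprincipal character. Every tuple with $\chi_1\cdots\chi_j=\chi_0$ survives. For example, $(\chi,\bar\chi,\chi_0,\dots,\chi_0)$ has coefficient $\phi(r)^{1-j}\mu(r)^{j-2}\tau(\chi)\tau(\bar\chi)$, and $\tau(\chi)\tau(\bar\chi)=\chi(-1)r\neq 0$ for primitive $\chi$. So your expectation that only principal-character contributions remain is false. Bounding these $T$'s by $N^{j-1}$ times a power of $\log N$ is the whole analytic content of the theorem (the analogue of the paper's $\mathcal{I}_2$), and your proposal does not supply it.

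The tools you name cannot supply it either. The explicit formula against the smooth volume, together with $\int_0^N|E|^2\ll N^2\log^2N$, controls pieces with at most two error factors. For instance, when $j=3$, $\int_0^N E_\chi(s)E_{\bar\chi}(N-s)\,\mathrm{d}s\ll N^2\log^2 N$ by Cauchy--Schwarz. But $T(\chi,\bar\chi,\chi_0,\dots,\chi_0)$ also contains pieces where the principal variables contribute their own $\zeta$-zero errors. For $j=3$,
\[
\sum_{m_1+m_2\le N}\Lambda(m_1)\chi(m_1)\Lambda(m_2)\bar\chi(m_2)\bigl(\psi(N-m_1-m_2)-(N-m_1-m_2)\bigr)
\]
pairs an error function with a signed measure. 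Pointwise or mean-square bounds for $E$ then give only $N^{5/2}\log^2N$, and the corresponding triple sum over zeros is not absolutely convergent. The needed saving comes from cancellation in a bilinear form in $\Lambda\chi$ and $\Lambda\bar\chi$, which is visible only on the Fourier side.

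This can be repaired within your framework. In each surviving tuple, pick two nonprincipal characters $\chi_1,\chi_2$ and write $T=\sum_{m_3,\dots,m_j}\prod_{i\ge 3}\Lambda(m_i)\chi_i(m_i)\,S(N-m_3-\dots-m_j)$, where
\[
S(x)=\sum_{m_1+m_2\le x}\Lambda(m_1)\chi_1(m_1)\Lambda(m_2)\chi_2(m_2)=\int_0^1\Psi(z,\chi_1)\Psi(z,\chi_2)I_x(z^{-1})\,\mathrm{d}\alpha,\qquad z=\mathrm{e}^{-1/x}e(\alpha).
\]
Bound $S(x)\ll_q x\log^3 x$ using three ingredients:
\begin{itemize}
\item the dyadic splitting based on $|I_x(z^{-1})|\le\min\{x,\|\alpha\|^{-1}\}$;
\item Cauchy--Schwarz;
\item the GRH mean square $\int_{\|\alpha\|\le 1/(2h)}|\Psi(z,\chi)|^2\,\mathrm{d}\alpha\ll\frac{x}{h}\log^2 x$ quoted in the paper.
\end{itemize}
Then sum the remaining $j-2$ variables trivially. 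This gives $\mathcal{O}_q(N^{j-1}\log^3N)$ without the Friedlander--Goldston asymptotics and singular-series averages that the paper needs for $\widetilde\Psi_{j-1}(z,\chi)$. Reaching the stated $\log^2N$ would need a sharper treatment; note that the paper's own estimate for $\mathcal{I}_1$ also ends at $\log^3N$.
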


\begin{remark}
For the definition of the generalized Euler function we refer the reader to Tóth \cite{tot22}. Such function arises in the computation leading to \eqref{eq:Psik}.
We prove the necessary identities in Lemma \ref{sec4:1} below; see Remark \ref{rem-C} for more details. Of course, $C_{q,2}=1$.
\end{remark}

\begin{remark}
\label{cost=0}
We notice that we detect a similar phenomenon to the one pointed out in our previous paper \cite{mig25}, i.e. the constant $C_{q,j}$ may vanish according to the parity of $j$ and $q$. In particular, it is due to the relationship between $C_{q,j}$ and the generalized Euler function $\phi_{j}(q)$; see Lemma \ref{sec4:1} and Remarks \ref{phi-val} and \ref{rem-C}. This is in fact consistent with the nature of this problem.
By definition \eqref{psi-j}, if $n$ and $j$ have the same parity we may expect that $\psi_j(n) \asymp n^{j - 1}$, 
while if $n$ and $j$ have opposite parity at least one of the summands in $m_1 + \dots + m_j = n$, which gives rise to a positive product $\Lambda(m_{1})\cdots\Lambda(m_{j})$, is a power
of $2$ and $\psi_j (n) \ll \psi_{j - 1}(n) \log(n)$.
More precisely, for any $j \ge 2$ we have 
\begin{align*}
\psi_{j}(n)
&=
\sum_{n=m_{1}+\dots+m_{j}}\Lambda(m_{1})\cdots\Lambda(m_{j}) \\
&=
\sum_{m_{j}\le n}\Lambda(m_{j})\sum_{n-m_{j}=m_{1}+\dots+m_{j-1}}\Lambda(m_{1})\cdots\Lambda(m_{j-1})
=\sum_{m \le n}\Lambda(m)\cdot\psi_{j-1}(n-m).
\end{align*}
Let $q \ge 1$ be an integer.
By definition \eqref{def-G}, $G_{q, j}(N)$ is a sum restricted to multiples of $q$.
If $q$ is even and $j$ is odd, $G_{q, j}(N)$ is of lower order of magnitude than $G_{1, j}(N)$.
This is reflected in the fact that in this case, and only in this case, $C_{q, j} = 0$.
\end{remark}
\section{Lemmas and outline of the proof}
\label{sec2}
As in \cite{ike25} and in our previous work \cite{mig25}, we begin by expressing $G_{q,j}(N)$ as an integral of $I_{N}(z^{-1})$ and $F_{q,j}(z)$, where $z=\e^{-1/N}\exp(2\pi i\alpha)=:r\e(\alpha)$,
\[
F_{q,j}(z)=\sum_{n\,:\,q\mid n} \psi_{j}(n)z^{n}\quad\text{and}\quad I_{N}(z)=\sum_{n\le N}z^{n}.
\]
We obtain that
\begin{equation}
\label{sec2:1}
\begin{split}
\int_{0}^{1} F_{q,j}(z)I_{N}(z^{-1})\,\dx\alpha &= \sum_{n\,:\,q\mid n} \psi_{j}(n)\int_{0}^{1} r^{n}\e(n\alpha)\left(\sum_{n'\le N} r^{-n'}\e(-n'\alpha)\right)\,\dx\alpha \\
&=\sum_{n\,:\,q\mid n} \psi_{j}(n)\sum_{n'\le N} r^{n-n'}\int_{0}^{1}\e(\alpha(n-n'))\,\dx\alpha \\ 
&=\sum_{\substack{n\le N \\ q\mid n}} \psi_{j}(n)=G_{q,j}(N).
\end{split}
\end{equation}
In particular, for $q=1$, we have
\begin{equation}
\label{eq:G1j}
G_{1,j}(N)=\int_{0}^{1} F_{1,j}(z)I_{N}(z^{-1})\,\dx\alpha=\int_{0}^{1} \Psi^{j}(z)I_{N}(z^{-1})\,\dx\alpha,
\end{equation}
where we write
\begin{equation}
\label{def-Psi}
\Psi(z)=\sum_{n}\Lambda(n)z^n \quad\text{and}\quad \Psi(z,\chi)=\sum_{n}\chi(n)\Lambda(n)z^n,
\end{equation}
for a Dirichlet character $\chi \bmod q$.
Then, we rewrite $F_{q,j}(z)$ as a sum over characters $\chi \bmod q$. As a result, we get the equivalent version of Lemma 1 of Goldston and Suriajaya \cite{gol23}.
\begin{lemma}
\label{l1}
For $q\ge 1$, $N\ge 4$, denoting by $k:=j-1$ and defining
\begin{equation}
\label{def-Psik}
\widetilde{\Psi}_{k}(z, \chi):=\sum_{h\ge 1} \chi(h)\cdot z^h\cdot \psi_{k}(h),
\end{equation}
we have that
\[
F_{q,j}(z)=\frac{1}{\phi(q)}\sum_{\chi \bmod q}\chi(-1)\widetilde{\Psi}_{k}(z,\chi)\Psi(z,\overline{\chi}) + \mathcal{O}(N^{k}\log N\log q).
\]
\end{lemma}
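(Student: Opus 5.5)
We decompose $n = h + m$ with $h = m_1+\dots+m_k$ and $m = m_j$, so that $\psi_j(n)=\sum_{h+m=n}\psi_k(h)\Lambda(m)$ by the recursion in Remark \ref{cost=0}. Then
\[
F_{q,j}(z)=\sum_{h\ge1}\sum_{m\ge1}\psi_k(h)\Lambda(m)z^{h+m}\,\mathbbm{1}_{q\mid h+m}.
\]
The plan is to detect the condition $q\mid h+m$ with characters. This works only when $(m,q)=1$, and in that case $(h,q)=1$ as well.

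\textbf{Coprime terms.} For $(m,q)=1$ we have $q\mid h+m$ if and only if $h\equiv -m \pmod q$. This is equivalent to $h\overline{m}\equiv -1$, where $\overline m$ is the inverse of $m$ modulo $q$. Orthogonality gives
\[
\mathbbm{1}_{h\equiv -m}=\frac{1}{\phi(q)}\sum_{\chi\bmod q}\overline{\chi}(-1)\chi(h)\overline{\chi}(m).
\]
This identity is valid for every $h$: if $(h,q)>1$ then $\chi(h)=0$ for all $\chi$, and correctly $h\not\equiv -m$. Since $\overline\chi(-1)=\chi(-1)$, summing over $h$ and over $m$ coprime to $q$ gives exactly
\[
\frac{1}{\phi(q)}\sum_{\chi}\chi(-1)\widetilde{\Psi}_k(z,\chi)\Psi(z,\overline\chi).
\]
Here the condition $(m,q)=1$ is automatic, because $\overline\chi(m)=0$ otherwise.

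\textbf{Non-coprime terms.} What remains are the terms with $(m,q)>1$. Since $\Lambda(m)\ne0$, this means $m=p^a$ with $p\mid q$. We bound these trivially with absolute values and $|z|=r=\e^{-1/N}$:
\[
\Bigl|\sum_{h}\sum_{\substack{m=p^a\\ p\mid q}}\cdots\Bigr|\le \Bigl(\sum_h\psi_k(h)r^h\Bigr)\Bigl(\sum_{p\mid q}\sum_{a\ge1}\log p\; r^{p^a}\Bigr).
\]

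\textbf{Bounding the two factors.} For the first factor, $\psi_k(h)\ll h^{k-1}$, using $\sum_{m\le x}\Lambda(m)\ll x$ and induction, or simply Chebyshev. Alternatively, $\sum_h\psi_k(h)r^h=\Psi(r)^k$, and $\Psi(r)\ll N$ by Chebyshev and partial summation. Either way the first factor is $\ll N^k$.

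For the second factor, the number of $a$ with $p^a\le N$ is $\ll \log N/\log p$. The tail $p^a>N$ contributes $O(1)$ for each $p$ thanks to the decay of $r^{p^a}$, so it adds only $\ll\sum_{p\mid q}\log p\le\log q$, which is absorbed. Hence the second factor is
\[
\ll\sum_{p\mid q}\log p\cdot\frac{\log N}{\log p}\le\omega(q)\log N\ll \log q\,\log N,
\]
using $\omega(q)\ll\log q$ for $q\ge2$; the case $q=1$ is vacuous. Multiplying the two factors gives the error term $\mathcal{O}(N^k\log N\log q)$.

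\textbf{Main obstacle.} The only delicate point is that the orthogonality detection requires invertibility of $m$, not of $h$. The error therefore comes solely from the prime powers $m$ sharing a factor with $q$. It is important to choose the split so that $m$, the single $\Lambda$-variable, carries the coprimality condition, since its non-coprime contribution is tiny. Imposing the condition on $h$ instead would leave a much larger exceptional set.
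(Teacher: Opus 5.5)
Your proof is correct and follows the paper's argument. You split according to whether the single variable $m=m_j$ is coprime to $q$, detect $h\equiv -m \pmod q$ by orthogonality (using $\overline{\chi}(-1)=\chi(-1)$) to get the main term, and bound the terms with $(m,q)>1$ trivially; your version of that bound — dropping the congruence and factoring it as $\Psi(r)^k\ll N^k$ times $\sum_{p\mid q}\sum_{a\ge1}\log p\, r^{p^a}\ll \log q\log N$ — is a cleaner, more explicit form of the paper's estimate, which is written with $\bigl|\sum_{(m_i,q)=1}\Lambda(m_i)z^{m_i}\bigr|^k$.
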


\begin{proof}
    \[
F_{q,j}(z)=\sum_{\substack{n=m_{1}+\dots+m_{j} \\ q\mid n}} \Lambda(m_{1})\dots\Lambda(m_{j})\cdot z^{m_{1}+\dots+m_{j}} =: S_{1}+S_{2}
\]
say, where
\begin{align*}
&S_{1}=\sum_{m_{j}\,:\,(m_{j},q)=1} \sum_{\substack{m_{1},\dots,m_{k} \\ m_{1}+\dots+m_{k}\equiv_{q}-m_{j}}} \Lambda(m_{1})\dots\Lambda(m_{j})\cdot z^{m_{1}+\dots+m_{j}} \\
 &S_{2}=\sum_{m_{j}\,:\,(m_{j},q)>1} \sum_{\substack{m_{1},\dots,m_{k} \\ m_{1}+\dots+m_{k}\equiv_{q}-m_{j}}} \Lambda(m_{1})\dots\Lambda(m_{j})\cdot z^{m_{1}+\dots+m_{j}}.
\end{align*}
By orthogonality and definition \eqref{def-Psik}, we get
\begin{align*}
S_{1}=&\sum_{\chi\bmod q}\frac{\chi(-1)}{\phi(q)}\sum_{m_{1},\dots,m_{k}}\chi(m_{1}+\dots+m_{k})\Lambda(m_{1})\cdots z^{m_{1}+\dots+m_{k}}\sum_{m_{j}}\overline{\chi}(m_{j})\Lambda(m_{j})z^{m_{j}} \\ &= \frac{1}{\phi(q)}\sum_{\chi \bmod q}\chi(-1)\widetilde{\Psi}_{k}(z,\chi)\Psi(z,\overline{\chi}).
\end{align*}
Then, for the second summand, arguing as in \eqref{ii}, we get
\[
S_{2}\ll \sum_{m_{j}\,:\, (m_{j},q)>1}\Lambda(m_{j})|z|^{m_{j}}\biggl|\sum_{(m_{i},q)=1}\Lambda(m_{i})z^{m_{i}}\biggr|^{k}\ll N^{k}(\log N\log q),
\]
for every $i\ne j$.
\end{proof}
\begin{remark}
\label{rem-comp}
With respect to \cite{ike25}, we need the new function $\widetilde{\Psi}_{k}$ in definition \eqref{def-Psik}, since we are considering $j\ge 3$ primes, in the representation of the integer $n$. In fact, for $k=1$, we have that $\widetilde{\Psi}_{k}(z,\chi)=\Psi(z,\chi)$ and thus we come back to the problem of \cite{ike25}.
\end{remark}
\noindent Inserting this estimate into \eqref{sec2:1}, we split $G_{q,j}(N)$ into three terms that we have to evaluate, i.e. 
\[
G_{q,j}(N)=\mathcal{I}_{1}+\mathcal{I}_{2}+\mathcal{O}\left(\frac{\mathcal{I}_{3}}{\phi(q)}\right),
\]
where
\begin{equation}
\label{eq:I1}
\mathcal{I}_{1}=\frac{1}{\phi(q)}\int_{0}^{1}\widetilde{\Psi}_{k}(z,\chi_{0})\Psi(z, \overline{\chi}_{0})I_{N} (z^{-1})\,\dx\alpha
\end{equation}
\begin{equation}
\label{eq:I2}
\mathcal{I}_{2}=\frac{1}{\phi(q)}\sum_{\chi\ne\chi_{0}}\chi(-1)\int_{0}^{1}\widetilde{\Psi}_{k}(z,\chi)\Psi(z, \overline{\chi})I_{N}(z^{-1})\,\dx\alpha
\end{equation}
\begin{equation}
\label{eq:I3}
\mathcal{I}_{3}=(N^{j-1}\log N\log q) \int_{0}^{1} I_{N}(z^{-1})\,\dx\alpha.
\end{equation}

\section*{Sketch of the proof}
The estimate of $\mathcal{I}_{3}$ easily follows from (2.4)-(2.7) of \cite{ike25}, as
\begin{equation}
\label{eq:I_N}
|I_{N}(z^{-1})|\le\min\{N, \Vert \alpha \Vert^{-1}\}\implies \int_{0}^{1}|I_{N}(z^{-1})|\,\dx\alpha=\mathcal{O}(\log N),
\end{equation}
therefore\, $\mathcal{I}_{3}=\mathcal{O}(N^{j-1}\log^{2} N\log q)$. For $\mathcal{I}_{1}$, in \S\,\,\ref{subsecI1} we show that we can approximate $\widetilde{\Psi}_{k}(z,\chi_{0})$ as 
\[
\widetilde{\Psi}_{k}(z,\chi_{0})=C_{q,j}\cdot (\Psi(z)+\mathcal{O}(\log N\log q))^{j-1},
\]
where $C_{q,j}$ is defined in \eqref{def-Cqj}.
Inserting this into \eqref{eq:I1}, we have the main term of $G_{q,j}$
\[
\mathcal{I}_{1}=\frac{C_{q,j}}{\phi(q)}\cdot G_{1,j}(N)+\mathcal{O}_{q}(N^{j-1}\log^j N\log^j q),
\]
(see \S\,\ref{subsecI1} for the details).
Starting from \eqref{eq:I2},
\[
|\mathcal{I}_{2}|\ll \max_{\chi\ne\chi_{0}}\sum_{0\le t<\log_{2}N}\frac{N}{2^t}\int_{0}^{1/2h} |\widetilde{\Psi}_{k}(z,\chi)\Psi(z,\overline{\chi})|\,\dx\alpha,
\]
for $h=N/2^{t+2}$, we apply the Cauchy-Schwarz inequality to the integral and then we focus on 
\[
\int_{0}^{1/2h} |\widetilde{\Psi}_{k}(z,\chi)|^2\,\dx\alpha,
\]
since the integral of $|\Psi(z,\overline{\chi})|^2$ has already been discussed in \cite{ike25}. By Gallagher's Lemma, we reduce to the study of 
\[
\int_{0}^{y}\left|\sum_{\ell\in J}\chi(\ell)e^{-\ell/N}\cdot\psi_{k}(\ell) \right|^2\,\dx x, \qquad\text{for}\quad J=[0,x], [x,x+h] \quad\text{and}\quad y\in\{h,+\infty\},
\]
for $\psi_{k}(\ell)$ defined as in \eqref{psi-j}. Comparing to the work in \cite{ike25}, here we have to introduce Goldbach-type estimates for the $\ell^2$-norm of the error terms, for $2\le k\le 4$, while we exploit stronger results on the asymptotic behaviours by Friedlander and Goldston \cite{fri95}, when $k\ge 5$. Moreover, we will use the fact that both the generalized singular series and its square have constant average, (see definition \eqref{eq:sigmaj} and Lemmas \ref{sec4:3} and \ref{sec4:4}).

Plugging these upper bounds into the previous integral, we conclude that
 \[
 \mathcal{I}_{2}\ll_{q} N^{j-3/2}\log^2 N < N^{j-1}\log^j N \implies \mathcal{I}_{2}=\mathcal{O}_{q}(N^{j-1}\log^j N).
 \]
We refer to \S\,\ref{subsecI2} for the detailed computations.
 
 \section{Proof of Theorem \ref{th1}}\label{sec3}
As we have mentioned at the end of \S\, \ref{sec2}, we study $\mathcal{I}_{1}$, $\mathcal{I}_{2}$ and $\mathcal{I}_{3}$.  
Since the evaluation of $\mathcal{I}_{3}$ directly follows from \eqref{eq:I_N} and \eqref{eq:I3}, we focus on the study of $\mathcal{I}_{1}$ and $\mathcal{I}_2$.

\subsection{Evaluation of \texorpdfstring{$\mathcal{I}_{1}$}{I1}}
\label{subsecI1}
For $\chi=\chi_{0}$, recalling definition \eqref{def-Psik}, we have
\[
\widetilde{\Psi}_{k}(z,\chi_{0})=\sum_{(m_{1}+\dots+m_{k},\,q)=1}\Lambda(m_{1})\dots\Lambda(m_{k})\cdot z^{m_{1}+\dots+m_{k}}.
\]
In order to evaluate $\mathcal{I}_{1}$, we need some preliminary results.
\begin{lemma}
\label{l_Psi_chi0}
Let $h\in\mathbb{Z}^* _{q}$ and define the function
\begin{equation}
\label{def-psi-h}
\Psi(z,q,h):=\sum_{m\equiv_{q}h}\Lambda(m)z^{m}.
\end{equation}
Then
\[
\Psi(z,q, h)=\frac{\Psi(z,\chi_{0})}{\phi(q)} + \mathcal{O}(\log N\log q)=\frac{\Psi(z)+\mathcal{O}(\log N\log q)}{\phi(q)},
\]
where $\Psi(z,\chi_{0})=\sum_{n:(n,q)=1}\Lambda(n)z^n$, by definition \eqref{def-Psi}.
\end{lemma}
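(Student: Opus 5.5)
The plan is to separate the residue condition $m\equiv_q h$ by orthogonality of characters, identify the principal-character piece as the main term, and compare $\Psi(z,\chi_0)$ with $\Psi(z)$ directly. Since $(h,q)=1$,
\[
\Psi(z,q,h)=\frac{1}{\phi(q)}\sum_{\chi\bmod q}\overline{\chi}(h)\Psi(z,\chi)
=\frac{\Psi(z,\chi_0)}{\phi(q)}+\frac{1}{\phi(q)}\sum_{\chi\ne\chi_0}\overline{\chi}(h)\Psi(z,\chi).
\]
Everything then reduces to two facts: a comparison of $\Psi(z,\chi_0)$ with $\Psi(z)$, and the treatment of the non-principal sum.

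\emph{Second equality.} This part I expect to be routine. The difference $\Psi(z)-\Psi(z,\chi_0)$ runs only over prime powers $p^a$ with $p\mid q$. Since $|z|=\e^{-1/N}<1$, it is bounded by
\[
\sum_{p\mid q}\log p\sum_{a\ge1}\e^{-p^a/N}\le\sum_{p\mid q}\log p\,\Bigl(\frac{\log N}{\log p}+\mathcal{O}(1)\Bigr)\ll \omega(q)\log N+\log q\ll \log N\log q.
\]
Here the terms with $p^a\le N$ number at most $\log N/\log p$. The terms with $p^a>N$ form a rapidly convergent tail, because the exponents $p^a/N$ at least double from one term to the next. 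This is the same estimate already used for $S_2$ in the proof of Lemma \ref{l1}. Dividing by $\phi(q)$ gives $\Psi(z,\chi_0)/\phi(q)=(\Psi(z)+\mathcal{O}(\log N\log q))/\phi(q)$, up to the harmless factor $1/\phi(q)\le 1$ on the error.

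\emph{First equality, which is the main obstacle.} The stated bound requires that the non-principal part $\phi(q)^{-1}\sum_{\chi\ne\chi_0}\overline{\chi}(h)\Psi(z,\chi)$ be absorbed into $\mathcal{O}(\log N\log q)$. I would first reduce each $\Psi(z,\chi)$ to the primitive character $\chi^*$ inducing it. By the same prime-power count as above, this costs only $\mathcal{O}(\log N\log q)$. I would then apply the explicit formula under GRH:
\[
\Psi(z,\chi^*)=-\sum_\rho\Gamma(\rho)\Bigl(\frac1N-2\pi i\alpha\Bigr)^{-\rho}+\mathcal{O}_q(\log N).
\]
Stirling's formula controls $\Gamma(\rho)$ along $\Re\rho=1/2$, and the zero-counting function bounds the number of zeros in each dyadic range of $|\Im\rho|$.

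Here I anticipate the real difficulty. Pointwise in $\alpha$, the zero sum is of size about $N^{1/2}$ rather than $\log N$. So the literal $\mathcal{O}(\log N\log q)$ for the non-principal part cannot come from a pointwise bound. It must come from the way the lemma is used in \S\ref{subsecI1}. There $\Psi(z,q,h)$ enters only through the expansion of $\widetilde{\Psi}_k(z,\chi_0)$ as a sum over $k$-tuples of residues $(h_1,\dots,h_k)$ with $h_1+\dots+h_k\in\mathbb{Z}_q^*$. The plan is therefore to verify that, after summing over these tuples, the non-principal contributions $\prod_i\overline{\chi_i}(h_i)$ reorganize into character sums that either cancel or are dominated by the principal term. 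Any remaining piece would then be pushed into the error $\mathcal{O}_q(N^{j-1}\log^jN)$ after integration against $I_N(z^{-1})$ via \eqref{eq:I_N}. I would first try to carry out this bookkeeping with the non-principal term kept as an explicit remainder $R(z,h)$, and then check that it contributes only within the stated error once inserted into \eqref{eq:I1}.
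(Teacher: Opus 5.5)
\textbf{There is a genuine gap in the first equality, and it cannot be closed: that equality is false for every $q\ge 3$.}

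Your proof of the second equality is correct and is what the paper uses. The paper simply quotes Lemma 2.2 of \cite{ike25}; your prime-power count is the argument behind it, as in \eqref{est-ii}. The first equality your proposal does not prove; you defer it to a reworking of \S\,\ref{subsecI1}. Your hesitation is justified. The paper's proof fails at exactly the step you flag: Lemma 6 of \cite{lan12} bounds the zero sums only by $\phi(q)\bigl(N^{1/2}\log(q+1)+N|\alpha|^{1/2}\log(qN)\bigr)\ll_q N$, and the claim that this is ``absorbed'' into $\mathcal{O}(\log N\log q)$ does not follow.

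To see the failure, fix $h\in\mathbb{Z}_q^*$, choose $a$ with $ah\equiv 1\pmod q$, and take $\alpha=a/q$. Since $\exp(2\pi i am/q)=\exp(2\pi i/q)$ for $m\equiv_q h$, the prime number theorem in progressions gives
\[
\Psi(z,q,h)=\exp(2\pi i/q)\sum_{m\equiv_q h}\Lambda(m)\e^{-m/N}=\exp(2\pi i/q)\,\frac{N}{\phi(q)}\,\bigl(1+o(1)\bigr).
\]
On the other hand,
\[
\Psi(z,\chi_0)=\sum_{b\in\mathbb{Z}_q^*}\exp(2\pi i ab/q)\sum_{m\equiv_q b}\Lambda(m)\e^{-m/N}=\frac{N}{\phi(q)}\bigl(\mu(q)+o(1)\bigr).
\]
This is a Ramanujan sum, so it is real up to $o(N)$. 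Comparing imaginary parts gives $|\Psi(z,q,h)-\Psi(z,\chi_0)/\phi(q)|\gg_q N$.

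So your estimate of ``about $N^{1/2}$'' for the non-principal part is also too optimistic. It holds only for $|\alpha|\ll 1/N$. In general the size is $N^{1/2}+N|\alpha|^{1/2}$ up to logarithms, and this is attained near rationals with denominator $q$, where the non-principal part is as large as the main term. For the same reason Lemma~\ref{est-Psi-chi0} fails pointwise: for $q=3$, $k=2$, $\alpha=1/3$ one finds $\widetilde{\Psi}_2(z,\chi_0)\sim -N^2/4$, whereas $C_{3,3}\Psi(z)^2\sim N^2/8$.

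Your fallback, proving only the integrated statement that \S\,\ref{subsecI1} needs, is the right repair, but as written it is only a plan. The missing idea is an exact identity. Expanding each $\Psi(z,q,h_i)$ in term $(i)$ of \eqref{eq:Psik} by orthogonality produces the coefficients
\[
S(\chi_1,\dots,\chi_k)=\sum_{\substack{h_1,\dots,h_k\in\mathbb{Z}_q^*\\ (h_1+\dots+h_k,\,q)=1}}\overline{\chi_1}(h_1)\cdots\overline{\chi_k}(h_k).
\]
The substitution $h_i\mapsto uh_i$ with $u\in\mathbb{Z}_q^*$ preserves the summation range, so $S=0$ unless $\chi_1\cdots\chi_k=\chi_0$. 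Hence, identically in $\alpha$, $(i)$ equals $\frac{\phi_k(q)}{\phi(q)^k}\Psi(z,\chi_0)^k$ plus terms with at least two factors $\Psi(z,\chi)$, $\chi\ne\chi_0$. In $\mathcal{I}_1$ you can bound the other $j-2$ factors trivially by $N$. The dyadic Cauchy--Schwarz argument of \eqref{eq:|I2|}, with \eqref{eq:Psi-conj} applied to both non-principal factors, then gives $\mathcal{O}_q(N^{j-1}\log^3 N)$.

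This cancellation cannot be replaced by any pointwise bound on your remainder $R(z,h)$. Inserting its true pointwise size $N^{1/2}+N|\alpha|^{1/2}$, together with $|\Psi(z)|\le N$ and \eqref{eq:I_N}, yields only $\mathcal{O}(N^{j})$, the size of the main term. Moreover, a single term $\Psi(z,\chi_0)^{j-1}\Psi(z,\chi)$ with $\chi\ne\chi_0$, integrated against $I_N(z^{-1})$, is in general of order $N^{j-1/2}$ under GRH.
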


\begin{proof}
By orthogonality
\begin{align*}
\Psi(z,q, h)& = \sum_{m}\sum_{\chi \bmod q} \frac{\overline{\chi}(h)}{\phi(q)}\chi(m)\Lambda(m)\e^{-m/N}\e(m\alpha) \\ &=\frac{1}{\phi(q)}\sum_{m}\chi_{0}(m)\Lambda(m)z^{m} + \frac{1}{\phi(q)}\sum_{\chi\ne\chi_{0} \,(q)}\overline{\chi}(h)\sum_{m}\chi(m)\Lambda(m)z^{m} \\ &=\frac{\Psi(z,\chi_{0})}{\phi(q)} +\frac{1}{\phi(q)}\sum_{\chi\ne\chi_{0} \,(q)}\overline{\chi}(h)\Psi(z,\chi)=:(a)+(b). 
\end{align*}
With reference to the notation of Languasco and Zaccagnini \cite{lan12}, we denote by
\begin{align*}
V(\alpha)&=\sum_{m}\e^{-m/N}\e(m\alpha)=\frac{1}{\e^w -1}=\frac{1}{w}+\mathcal{O}(1), \quad \text{for $w=\frac{1}{N}-2\pi i\alpha$}, \\  
\delta(\chi)&=\mathbbm{1}(\chi=\chi_{0})\,,\,\, W(\chi,\alpha, V)=\sum_{m}\Lambda(m)\chi(m)\e^{-m/N}\e(m\alpha)-\delta(\chi)V(\alpha).
\end{align*}
By Lemma 2 of \cite{lan12}, we have that
\begin{align*}
(b)=\frac{1}{\phi(q)}&\sum_{\chi\bmod q}\overline{\chi}(h)\left(-\sum_{\rho}\Gamma(\rho)w^{-\rho} + E(q,N)\right), \quad\text{where}\\ \\ E(q,N)&\ll \begin{cases}
1+\log^2 q & \text{if $\chi$ is a primitive character} \\ 
(\log N)(\log q)+\log^2 q & \text{if $\chi$ is not primitive}.
\end{cases} 
\end{align*}
By Lemma 6 of \cite{lan12}, we know that
\begin{align*}
 \sum_{\chi\bmod q}\biggl|\sum_{\rho}\Gamma(\rho)w^{-\rho} \biggr|\ll \phi(q)(N^{1/2}\log(q+1) + N|\alpha|^{1/2}\log(qN)) \ll_{q} N. 
\end{align*}
Thus, for $z=\e^{-w}$, we conclude that
\[
\Psi(z,h, q)=\sum_{m\equiv_{q}h}\Lambda(m)z^{m}=(a)+(b)=\frac{\Psi(z,\chi_{0})}{\phi(q)}+\mathcal{E'},
\]
with $\mathcal{E'}$ sum of the error terms from $(b)$. By Lemma 2.2 of \cite{ike25}, we know that $\Psi(z,\chi_{0})=\Psi(z)+\mathcal{O}(\log N\log q)$. Thus, $\mathcal{E'}$ can be absorbed in the error term of $\Psi(z,\chi_{0})$ and we have the thesis.
\end{proof}

\noindent The following result gives the asymptotic behavior of $\widetilde{\Psi}_{k}(z,\chi_{0})$.
\begin{lemma}
\label{est-Psi-chi0} 
With the previous assumptions, we have
\[
\widetilde{\Psi}_{k}(z,\chi_{0})=C_{q,j}\cdot(\Psi(z)+\mathcal{O}(\phi(q)\log q\log N))^{k},
\]
where $C_{q,j}$ is defined in \eqref{def-Cqj}.
\end{lemma}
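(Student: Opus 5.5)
The plan is to split $\widetilde{\Psi}_{k}(z,\chi_{0})$ according to the residues of $m_1,\dots,m_k$ modulo $q$, and then apply Lemma \ref{l_Psi_chi0} to each residue class. First we separate the tuples in which some $m_i$ has $(m_i,q)>1$. Such $m_i$ are prime powers $p^a$ with $p\mid q$, and these contribute $\sum_{(m,q)>1}\Lambda(m)|z|^m\ll \log N\log q$. Bounding the remaining $k-1$ factors trivially by $|\Psi(|z|)|\ll N$, these terms cost $\mathcal{O}(N^{k-1}\log N\log q)$. This cost will be absorbed by the expansion of $(\Psi(z)+\mathcal{O}(\phi(q)\log q\log N))^k$, whose cross terms are of exactly that shape.

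For the remaining tuples we write
\[
\sum_{\substack{h_1,\dots,h_k\in\mathbb{Z}_q^*\\ (h_1+\dots+h_k,q)=1}}\ \prod_{i=1}^{k}\Psi(z,q,h_i),
\]
with $\Psi(z,q,h)$ as in \eqref{def-psi-h}. Lemma \ref{l_Psi_chi0} gives, uniformly in $h$,
\[
\Psi(z,q,h)=\frac{\Psi(z)+\mathcal{O}(\log N\log q)}{\phi(q)}.
\]
The error term is essentially independent of $h$: it is the contribution of $(b)$, and it can be bounded by a quantity $E$ with $|E|\ll \log N\log q$. A cleaner route is to write each factor as $\phi(q)^{-1}(\Psi(z)+E_{h_i})$ and expand the product. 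The leading term is
\[
\frac{\Psi(z)^k}{\phi(q)^k}\cdot \#\{(h_1,\dots,h_k)\in(\mathbb{Z}_q^*)^k:(h_1+\dots+h_k,q)=1\}.
\]
By Tóth's definition (to be confirmed by Lemma \ref{sec4:1}), this count equals $\phi_k(q)$. Hence the leading term is $\phi_k(q)\Psi(z)^k/\phi(q)^k=C_{q,j}\Psi(z)^k$, since $C_{q,j}=\phi_{k}(q)/\phi^{k}(q)$ with $k=j-1$.

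The cross terms are of the form $\Psi(z)^{k-r}\prod E_{h_i}$ with $r\ge 1$ error factors. Summed over at most $\phi(q)^k$ tuples and divided by $\phi(q)^k$, they are bounded by $\binom{k}{r}|\Psi(z)|^{k-r}(c\log N\log q)^r$. To match the stated form $C_{q,j}(\Psi(z)+\mathcal{O}(\phi(q)\log q\log N))^k$, we pull out $C_{q,j}$, using $C_{q,j}\ge \phi(q)^{-(k-1)}$ or similar when it is positive. This is why the error term is enlarged by the factor $\phi(q)$: $\phi(q)^{k}/\phi_k(q)$ is at most of order $\phi(q)^{k}$, which is dominated by $\phi(q)^r$ per error factor up to $q$-dependent constants. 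For the final application only the bound $\mathcal{O}_q(N^{k-1}\log N\log q)$ on the total error matters, so the statement is to be read as $C_{q,j}\Psi(z)^k+\mathcal{O}_q\bigl(\sum_{r\ge1}|\Psi(z)|^{k-r}(\log N\log q)^r\bigr)$.

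The main obstacle is this bookkeeping. We must justify writing the error inside the $k$-th power with a factor $\phi(q)$ rather than outside. The degenerate case $C_{q,j}=0$ ($q$ even, $k$ even) must also be treated separately: there the main term vanishes and only the error bound survives, and the statement must then be understood with the error outside the factor $C_{q,j}$. The counting identity for $\phi_k(q)$ is the other nontrivial ingredient, but it is deferred to Lemma \ref{sec4:1}.
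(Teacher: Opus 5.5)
Your decomposition is the paper's: separate the tuples in $\Delta_k$, group the rest by residues $h_i\in\mathbb{Z}_q^*$, replace each $\Psi(z,q,h_i)$ using Lemma \ref{l_Psi_chi0}, and count admissible tuples by $\phi_k(q)$.

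Your objection about the degenerate case is correct, and the paper overlooks it. The $\Delta_k$ part carries no factor $C_{q,j}$. For example, with $q=2$ and $k=2$ the right-hand side is $0$, while $\widetilde{\Psi}_2(z,\chi_0)=\sum_{n\ \mathrm{odd}}\psi_2(n)z^n\neq 0$.

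Your claim that the bound $\mathcal{O}(N^{k-1}\log N\log q)$ for $\Delta_k$ is absorbed by the cross terms is wrong. Those cross terms have size $|\Psi(z)|^{k-1}\phi(q)\log q\log N$. Since $|\Psi(z)|^2$ has mean $\asymp N\log N$ over $\alpha$, $|\Psi(z)|$ is usually far smaller than $N$. Your fallback reading with $|\Psi(z)|^{k-r}$ does not cover this term either. For the admissible-$h$ part, no lower bound on $C_{q,j}$ is needed: that sum has exactly $\phi_k(q)$ terms, so its cross terms already carry the factor $\phi_k(q)/\phi(q)^k=C_{q,j}$.

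The genuine gap, which your proof shares with the paper's, is using Lemma \ref{l_Psi_chi0} uniformly in $\alpha$. Its proof bounds the non-principal part $\phi(q)^{-1}\sum_{\chi\neq\chi_0}\overline{\chi}(h)\Psi(z,\chi)$ only by $\ll_q N$, not by $\mathcal{O}(\log N\log q)$, and the claim is false pointwise.

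Take $q=3$, $\alpha=1/3$, $\omega=\exp(2\pi i/3)$, and $S_a=\sum_{m\equiv a \bmod 3}\Lambda(m)\e^{-m/N}\sim N/2$. Then $\Psi(z,3,1)=\omega S_1\sim \omega N/2$, while $\Psi(z,\chi_0)/2=(\omega S_1+\omega^2 S_2)/2\sim -N/4$. The lemma under review fails at the same point. With $k=2$ one gets $\widetilde{\Psi}_2(z,\chi_0)=\omega S_2^2+\omega^2 S_1^2+\mathcal{O}(N\log N)\sim -N^2/4$, whereas $C_{3,3}\Psi(z)^2\sim \tfrac12(-N/2)^2=N^2/8$. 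That is a discrepancy of order $N^2$ against an allowed error $\mathcal{O}(N\log N)$. So neither the stated form nor your fallback $C_{q,j}\Psi(z)^k+\mathcal{O}_q(N^{k-1}\log N\log q)$ holds pointwise.

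What is true comes from expanding $\Psi(z,q,h)=\phi(q)^{-1}\sum_{\chi}\overline{\chi}(h)\Psi(z,\chi)$ inside the product. The admissible set is invariant under $h\mapsto uh$ for $u\in\mathbb{Z}_q^*$. Hence the coefficient of $\prod_i\Psi(z,\chi_i)$ vanishes unless $\chi_1\cdots\chi_k=\chi_0$. The all-principal term gives $C_{q,j}\Psi(z,\chi_0)^k$, and the terms linear in a non-principal $\Psi(z,\chi)$ cancel. Everything else is at least quadratic in non-principal $\Psi(z,\chi)$.

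These quadratic terms are not small pointwise, but they are in mean square. Insert them into $\mathcal{I}_1$, bound the other factors by $|\Psi|\ll N$, and use \eqref{eq:Psi-conj} over dyadic ranges of $\alpha$; the result is $\mathcal{O}_q(N^{j-1}\log^3 N)$. So the argument has to be run inside the integral $\mathcal{I}_1$, not through a pointwise identity for $\widetilde{\Psi}_k(z,\chi_0)$.
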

\begin{proof}
We first discuss the case $k=2$, for the sake of clarity, and then we study the asymptotic behaviour of $\widetilde{\Psi}_{k}(z,\chi_{0})$ for general $k\ge 2$. We set
\[
\Delta_{k}:=\{(m_{1},\dots,m_{k})\in(\mathbb{N}^*)^k\,:\,(m_{1}\cdots m_{k},q)>1\}
\]
and we introduce some notations:
\[
{\sumast_{m}}:=\sum_{(m,q)=1} \qquad\text{and}\qquad {\sumtimes_{m}} :=\sum_{(m,q)>1}.
\]
By definition \eqref{def-Psik}, we have
\begin{align}
\notag
 \widetilde{\Psi}_{2}(z,\chi_{0})&=\sum_{\substack{m_{1}, m_{2}: \\ (m_{1}+m_{2},\,q)=1}} \Lambda(m_{1})\Lambda(m_{2})z^{m_{1}+m_{2}}  \\
 \notag
  &=
  \biggl({\sumast_{m_{1}}} +{\sumtimes_{m_{1}}}\biggr)\Lambda(m_{1})z^{m_{1}} \cdot 
  \biggl({\sumast_{m_{2}}} +{\sumtimes_{m_{2}}}\biggr)\Lambda(m_{2})z^{m_{2}}\,\chi_{0}(m_{1}+m_{2}) \\
\notag
  &=
  {\sumast_{m_{1}, m_{2}}}\chi_{0}(m_{1}+m_{2})\Lambda(m_{1})z^{m_{1}}\Lambda(m_{2})z^{m_{2}} \\
\notag
  &\quad+
  \biggl({\sumast_{m_{1}}} {\sumtimes_{m_{2}}}+{\sumast_{m_{1}}} {\sumtimes_{m_{2}}}
  +{\sumtimes_{m_{1}}} {\sumtimes_{m_{2}}} \biggr)
  \chi_{0}(m_{1}+m_{2})\Lambda(m_{1}) \Lambda(m_{2})z^{m_1+m_{2}}  \\
\notag
  &=:
  {\sumast_{m_{1}}}{\sumast_{m_{2}}}\chi_{0}(m_{1}+m_{2})\Lambda(m_{1}) \Lambda(m_{2})z^{m_1+m_{2}} \\
\label{eq:Psi2} 
  &\quad+
  \sum_{(m_{1},m_{2})\in\Delta_{2}}\chi_{0}(m_{1}+m_{2})\Lambda(m_{1})z^{m_{1}}\Lambda(m_{2})z^{m_{2}}=:(i)+(ii),
\end{align}
say.
Arguing as for $S_{1}$ in Lemma \ref{l1}, we get that the main term is given by $(i)$, (where $(m_{1},\,q)=(m_{2},\,q)=1$). In order to split off $\chi_{0}(m_{1}+m_{2})$ from the $m_{1,2}$-sums, we introduce two more variables $h_{1},\,h_{2}\in\mathbb{Z}_{q}^*$, so that
\begin{align*}
(i)&=\sum_{h_{1}\in\,\mathbb{Z}_{q}^*}\sum_{h_{2}\in\,\mathbb{Z}_{q}^*}\chi_{0}(h_{1}+h_{2})\sum_{m_{1}\equiv_{q} h_{1}}\Lambda(m_{1})z^{m_{1}}\sum_{m_{2}\equiv_{q} h_{2}}\Lambda(m_{2})z^{m_{2}} \\ 
&=\sum_{h_{1}\in\,\mathbb{Z}_{q}^*}\sum_{h_{2}\in\,\mathbb{Z}_{q}^*}\chi_{0}(h_{1}+h_{2})\Psi(z,q,h_{1})\Psi(z,q,h_{2}),
\end{align*}
where $\Psi(z,q,h)$ is defined in \eqref{def-psi-h}.
Using Lemma \ref{l_Psi_chi0}, 
we find that, independently from the actual values of $h_{1,2}$, we have
\begin{align*}
(i)&=\sum_{h_{1}\in\,\mathbb{Z}_{q}^*}\sum_{h_{2}\in\,\mathbb{Z}_{q}^*}\chi_{0}(h_{1}+h_{2})\left(\frac{\Psi(z,\chi_{0})}{\phi(q)} + \mathcal{O}(\log N\log q) \right)^2 \\ 
&= \frac{\phi_{2}(q)}{\phi(q)^2} (\Psi(z) + \mathcal{O}(\phi(q)\log N\log q))^2 =:C_{q,3}\cdot(\Psi(z) + \mathcal{O}(\phi(q)\log N\log q))^2.
\end{align*}
\noindent For $(ii)$, we notice that 
it is enough to discuss one of the two cases $(m_{i},q)=1$ and $(m_{j},q)>1$, since they will give the highest contribution in $(ii)$. For example, if we consider $(m_{1},q)=1$ and $(m_{2},q)>1$, we get that
\begin{align}
\notag
\sum_{h_{1}\in\,\mathbb{Z}_{q}^*}\sum_{h_{2}\in\,\mathbb{Z}\setminus \mathbb{Z}_{q}^*}&\chi_{0}(h_{1}+h_{2})\sum_{m_{1}\equiv_{q} h_{1}}\Lambda(m_{1})z^{m_{1}}\sum_{m_{2}\equiv_{q} h_{2}}\Lambda(m_{2})z^{m_{2}} \\ \notag
&=\sum_{h_{1}\in\,\mathbb{Z}_{q}^*}\sum_{h_{2}\in\,\mathbb{Z}\setminus \mathbb{Z}_{q}^*}\chi_{0}(h_{1}+h_{2})\sum_{m_{1}\equiv_{q} h_{1}}\Lambda(m_{1})z^{m_{1}}{\sumtimes_{m_{2}\,(q)}}\sum_{h_{2}\equiv_{q}m_{2}}\Lambda(m_{2})z^{m_{2}} \\ \label{ii}
&= \sum_{h_{1}\in\,\mathbb{Z}_{q}^*}\sum_{h_{2}\in\,\mathbb{Z}\setminus \mathbb{Z}_{q}^*}\chi_{0}(h_{1}+h_{2})\sum_{m_{1}\equiv_{q} h_{1}}\Lambda(m_{1})z^{m_{1}}{\sumtimes_{m_{2}\,(q)}}\Lambda(m_{2})z^{m_{2}}.
\end{align}
Arguing as in Lemma 1 of \cite{gol23}, the inner sum of \eqref{ii} is 
\begin{align}
\label{est-ii}
&\ll \sum_{k}\sum_{p\mid q}\log p \cdot \e^{-p^k/N} \le \log q\cdot\biggl(\sum_{2^{k}\le N} 1 +\sum_{2^k >N}\e^{-2^{k}/N}\biggr) \le \log q\cdot \log N,  
\end{align}
which implies that \eqref{ii} $\ll |\Psi(z)|\phi(q)\log N\log q$. Similarly, if $(m_{1},\,q)>1$ and $(m_{2},q)>1$, both the inner sums of \eqref{ii} can be treated as in \eqref{est-ii}. As a result, the main part of the error term in $(ii)$ is given by $\mathcal{O}(|\Psi(z)|\phi(q)\log N\log q)$.
For every $k\ge 2$, decomposing $\widetilde{\Psi}_{k}(z,\chi_{0})$ as in \eqref{eq:Psi2}, we  are able to isolate its main term, i.e. 
\begin{align}
\notag
\widetilde{\Psi}_{k}(z,\chi_{0})&={\sum_{m_{1}}}^*\dots{\sum_{m_{k}}}^*\chi_{0}(m_{1}+\dots+m_{k})\Lambda(m_{1})z^{m_{1}}\cdots\Lambda(m_{k})z^{m_{k}} \\ \notag
&\qquad+\sum_{(m_{1},\dots,m_{k})\in\Delta_{k}}\chi_{0}(m_{1}+\dots+m_{k})(\Lambda(m_{1})z^{m_{1}}\cdots\Lambda(m_{k})z^{m_{k}}) \\ \notag
&=:\sum_{h_{1}\in\mathbb{Z}_{q}^*}\cdots \sum_{h_{k}\in\mathbb{Z}_{q}^*} \chi_{0}(h_{1}+\dots+h_{k})\sum_{m_{1}\equiv_{q}h_{1}}\Lambda(m_{1})z^{m_{1}}\cdots\sum_{m_{k}\equiv_{q}h_{k}}\Lambda(m_{k})z^{m_{k}} \\  \notag &\qquad+\sum_{(m_{1},\dots,m_{k})\in\Delta_{k}}\chi_{0}(m_{1}+\dots+m_{k})\Lambda(m_{1})z^{m_{1}}\cdots\Lambda(m_{k})z^{m_{k}}
\\ \notag
&= \sum_{\substack{h_{1},\dots,h_{k}\in\mathbb{Z}_{q}^* \\(h_{1}+\dots+h_{k},\,q)=1}} \Psi(z,q,h_{1})\cdots\Psi(z,q,h_{k}) \\ \label{eq:Psik} 
&\qquad+ \sum_{(m_{1},\dots,m_{k})\in\Delta_{k}}\chi_{0}(m_{1}+\dots+m_{k})\Lambda(m_{1})z^{m_{1}}\cdots\Lambda(m_{k})z^{m_{k}}=: (i) + (ii). 
\end{align}
As above, the leading term of each $\Psi(z,q,h_{i})$ is independent of the $h_{i}$'s: this shows why the generalized Euler function $\phi_{k}(q)$ arises; see \cite{tot22} for its definition and properties, and our Lemma \ref{sec4:1}, in particular, for the multiplicativity.
As for $k=2$, we get that
\[
(i)=\frac{\phi_{k}(q)}{\phi(q)^{k}}(\Psi(z)+\mathcal{O}(\phi(q)\log q\log N))^{k}=:C_{q,j}(\Psi(z)+\mathcal{O}(\phi(q)\log q\log N))^{k},
\]
by Lemma 2.2 of \cite{ike25}. Again, for $(ii)$, we notice that we can neglect the case $(m_{i},q)>1$, for every $i=1,\dots,k$, and that it is enough to analyse the worst-case scenario in $\Delta_{k}$, i.e. 
\begin{equation}
\label{worst}
\exists\,t\in\{1,\dots,k\}\,\,s.t.\,\, (m_{t}, q)>1 \,\wedge\, (m_{i},q)=1,\,\forall\,i\ne t.
\end{equation}
\noindent Therefore, as in \eqref{ii}, if $(m_{j},q)>1$, for every $j$, we get an error term $\ll (\log q\cdot \log N)^k$, while, in the case described in \eqref{worst}, we find that $(ii)\ll \mathcal{O}(|\Psi(z)|^{k-1}\phi(q)\log N\log q)$.
\end{proof}
\noindent Inserting \eqref{eq:Psik} into \eqref{eq:I1}, by binomial theorem we have
\begin{align*}
\mathcal{I}_{1}&=\frac{C_{q,j}}{\phi(q)}\int_{0}^{1} \Bigl(\Psi(z)+\mathcal{O}(\log N\log q)\Bigr)\Bigl(\Psi(z)+\mathcal{O}(\log q\log N)\Bigr)^{j-1} I_{N}(z^{-1})\,\dx\alpha \\ 
&=:\frac{C_{q,j}}{\phi(q)}\int_{0}^{1}(\Psi(z)+\mathcal{O}(\mathcal{E}))^jI_{N}(z^{-1})\,\dx\alpha
=\frac{C_{q,j}}{\phi(q)}\int_{0}^{1}(\Psi(z)^j + \mathcal{O}(|\Psi(z)|^{j-1}\mathcal{E}))I_{N}(z^{-1})\,\dx\alpha \\ 
&=\frac{C_{q,j}}{\phi(q)}\int_{0}^{1}\Psi(z)^j I_{N}(z^{-1})\,\dx\alpha + \mathcal{O}_{q}\left(\int_{0}^{1} |\Psi(z)|^{j-1}\mathcal{E}\,\dx\alpha\right).
\end{align*}
We recall that, by \eqref{eq:G1j}
\[
\int_{0}^{1}\Psi^j (z)I_{N}(z^{-1})\,\dx\alpha = G_{1,j}(N).
\]
For the $\mathcal{O}$-term, we estimate $|\Psi(z)|$ by partial summation and after the standard change of variable $u=t/N$, as
\begin{align*}
|\Psi(z)|\le \sum_{n}\Lambda(n) \e^{-n/N} \ll N \int_{1/N}^{+\infty}u \e^{-u}\,\dx u \ll N,
\end{align*}
which implies, together with \eqref{eq:I_N}, that
\begin{align*}
\mathcal{O}_{q}\left(\int_{0}^{1} |\Psi(z)|^{j-1}\mathcal{E}\,\dx\alpha\right)=\mathcal{O}_{q}\left(N^{j-1}\mathcal{E}\cdot\log N\right) =\mathcal{O}_{q}\left(N^{j-1}\log^2 N\log q\right). 
\end{align*}
In conclusion, we get that
\begin{align}
\label{eq:stimaI1}
\mathcal{I}_{1}=\frac{C_{q,j}}{\phi(q)}\cdot G_{1,j}(N)+\mathcal{O}_{q}(N^{j-1}\log^3 N).
\end{align}

\subsection{Evaluation of \texorpdfstring{$\mathcal{I}_{2}$}{I2}}
\label{subsecI2}
By \eqref{eq:I2}, for $\chi\ne\chi_{0}$, we have
\begin{align}
\notag
|\mathcal{I}_{2}|&\le \frac{1}{\phi(q)} \sum_{\chi\ne\chi_{0}}\int_{0}^{1} |\widetilde{\Psi}_{k}(z,\chi)\Psi(z, \overline{\chi})|\cdot |I_{N}(z^{-1})|\,\dx\alpha \\ \notag
&\le \max_{\chi\ne\chi_{0}}\int_{0}^{1} |\widetilde{\Psi}_{k}(z,\chi)\Psi(z, \overline{\chi})|\cdot |I_{N}(z^{-1})|\,\dx\alpha \\  \notag
&\ll \max_{\chi\ne\chi_{0}} \int_{0}^{1/2} |\widetilde{\Psi}_{k}(z,\chi)\Psi(z, \overline{\chi})|\min\{N, ||\alpha||^{-1}\}\,\dx\alpha \\  \notag
&\ll \max_{\chi\ne\chi_{0}} \,N \int_{0}^{1/N} |\widetilde{\Psi}_{k}(z,\chi)\Psi(z, \overline{\chi})|\,\dx\alpha  \\  \notag
&\qquad+\,\max_{\chi\ne\chi_{0}}\sum_{0\le t < \log_{2} N} \frac{N}{2^t} \int_{2^{t}/N}^{2^{t+1}/N} |\widetilde{\Psi}_{k}(z,\chi)\Psi(z, \overline{\chi})|\,\dx\alpha \\  \notag
&\le \sum_{0\le t < \log_{2} N} \frac{N}{2^t} \left(\max_{\chi\ne\chi_{0}}\int_{0}^{2^{t+1}/N} |\widetilde{\Psi}_{k}(z,\chi)\Psi(z, \overline{\chi})|\,\dx\alpha \right) \\ \label{eq:|I2|}
&\le N\sum_{0\le t<\log_{2}N} 2^{-t} \max_{\chi\ne\chi_{0}}\biggl( \int_{0}^{2^{t+1}/N} |\widetilde{\Psi}_{k}(z,\chi)|^2\,\dx\alpha\biggr)^{1/2}\biggl( \int_{0}^{2^{t+1}/N} |\Psi(z,\overline{\chi})|^2\,\dx\alpha\biggr)^{1/2}.
\end{align}
As mentioned in Remark \ref{rem-comp}, in \cite{ike25} they have $\widetilde{\Psi}_{k}=\Psi$, hence the integral in \eqref{eq:|I2|} corresponds to the $L^2$-norm of $\Psi$; in our case, instead, we need to use the Cauchy-Schwarz inequality on \eqref{eq:|I2|},
so that $|\mathcal{I}_{2}|$ can be bounded from above by the $L^2$-norm of two different functions $\widetilde{\Psi}_{k}$ and $\Psi$.
Then we apply Gallagher's Lemma (see Lemma 1.9 of Montgomery \cite{mon71}) to these two factors. By the results in $(2.23)$ of \cite{ike25}, we know the behaviour of 
\begin{equation}
\label{eq:Psi-conj}
\int_{0}^{1/2h} |\Psi(z,\overline{\chi})|^2\,\dx\alpha \ll \frac{N}{h}\log^2 N,
\end{equation}
picking
\begin{equation}
\label{h} 
h=N/2^{t+2}.
\end{equation}
Therefore, it remains to study 
\begin{equation}
\label{eq:L2-Psik}
\int_{0}^{1/2h} |\widetilde{\Psi}_{k}(z,\chi)|^2\,\dx\alpha.
\end{equation}
Before going on, it is better to summarize the strategy for the evaluation of the $L^2$-norm of $\widetilde{\Psi}_{k}(z,\chi)$.
\subsubsection*{Outline of the proof for $\mathcal{I}_{2}$}
We study the asymptotic behaviour of $\widetilde{\Psi}_{k}(z,\chi)$, which is defined in \eqref{def-Psik}: if $k> 4$, by Theorem 2 of Friedlander and Goldston \cite{fri97} we have a precise estimate for the generalized Goldbach problem, that is  
\begin{align}
\label{psik-k-big} 
\psi_{k}(\ell)=\mathfrak{S}_{k}(\ell)\frac{\ell^{k-1}}{(k-1)!}+\mathcal{O}(\ell^{k-3/2}),
\end{align}
which we can plug into \eqref{def-Psik}. Instead, for $2\le k \le 4$, we have to rely on $L^2$-estimates of
\[
\sum_{\substack{\ell\le N \\ (\ell,q)=1}} \biggl|\chi(\ell)z^\ell\cdot \biggl(\psi_{k}(\ell)-\mathfrak{S}_{k}(\ell)\frac{\ell^{k-1}}{(k-1)!}\biggr)\biggr|^2,
\]
(see  Theorem 3 of \cite{fri97}).
Thanks to our generalization of Lemma 17.4 of \cite{mon71} and Proposition of Languasco at page $95$ of \cite{lan98}, we show that both $\mathfrak{S}_{k}$ and $\mathfrak{S}_{k}^2$ have a constant average $C'=C'(k)$ and $C''=C''(k)$ respectively, (see \eqref{eq:sigmaj} for the definition of $\mathfrak{S}_{k}$ and Lemmas \ref{sec4:2} and \ref{sec4:3}). We replace such bounds for $|\widetilde{\Psi}_{k}(z,\chi)|^2$ into \eqref{eq:L2-Psik} and, by the triangle inequality, we arrive at \eqref{est-b}. Hence, we apply Gallagher's Lemma and, neglecting the constant contribution $C'$, we get from \eqref{est-L2-Psik}, that
\begin{align*}
\int_{0}^{1/2h}  |\widetilde{\Psi}_{k}(z,\chi)|^2\,\dx\alpha &\ll \frac{1}{h^2}\int_{0}^{h} \biggl|\sum_{\ell\le x}\chi(\ell)\cdot\ell^{k-1}\e^{-\ell/N} \biggr|^2\,\dx x \\ 
&\quad+ \int_{0}^{+\infty} \biggl|\sum_{x<\ell\le x+h}\chi(\ell)\ell^{k-1}\e^{-\ell/N} \biggr|^2\,\dx x \\
&=:\frac{1}{h^2}(I_{1,j}(N,h)+I_{2,j}(N,h)).
\end{align*}
In \S \, \ref{est-I1h} - \ref{est-I2h}, by partial summation and the Pólya-Vinogradov inequality, we evaluate $I_{1,j}(N,h)$ and $I_{2,j}(N,h)$ and, thanks to the substitution in \eqref{eq:|I2|}, together with \eqref{eq:Psi-conj}, we show that the main contribution of $\mathcal{I}_{2}$ is equal to $\mathcal{O}_{q}(N^{j-1}\log N)$. Sections \S\,\ref{est-a}, \S\,\ref{esti-b}, \S\,\ref{est-err}, \S\,\ref{est-errC1} and \S\,\ref{est-errC2} are devoted to show that the error terms from the approximations \eqref{sec3:1} do not contribute to the final estimate of Theorem \ref{th1}.

\begin{remark}
\label{gold}
We can see the main difference with respect to \cite{ike25} in definition \eqref{def-Psik}: in fact, for $k=1$, $\psi_{1}(\ell)$ is an empty sum, since $\widetilde{\Psi}_{1}=\Psi$, while $k\ge 2$ implies a double sum inside \eqref{def-Psik}; in particular, for $j=3$, we reduce to the standard Goldbach problem. Since the asymptotic estimate 
\begin{equation}
\label{sec3:1}
\psi_{k}(\ell):=\sum_{\ell=m_{1}+\dots+m_{k}} \Lambda(m_{1})\dots\Lambda(m_{k}) \sim \mathfrak{S}_{k}(\ell)\ell^{k-1}
\end{equation}
is known to hold only for $k>4$, we will treat separately the cases $k>4$ and $k=2,3,4$ and we will rely on $\ell^2$-norm estimates for the small values of $k$. More precisely, by Theorem 2 of \cite{fri97}, for $k>4$, by definition \eqref{def-Psik} and formula \eqref{psik-k-big}, we have
\begin{align*}
\widetilde{\Psi}_{k}(z,\chi)=\sum_{\substack{(\ell,q)=1 \\ \ell\ge 1}}\chi(\ell)\cdot z^{\ell}\left(\mathfrak{S}_{k}(\ell)\frac{\ell^{k-1}}{(k-1)!}+\mathcal{O}(\ell^{k-3/2})\right).   
\end{align*}
\end{remark}

\begin{remark}
\label{rem-err} 
For the sake of simplicity, we temporarily neglect the error terms in the estimate of $\widetilde{\Psi}_{k}$; they will be discussed in detail in \S\,\ref{est-err} - \ref{est-errC1} - \ref{est-errC2}.
\end{remark}
\noindent We resume our detailed computations.
By Lemma \ref{sec4:2}, we know that
\[
\frac{1}{N}\sum_{n\le N}\mathfrak{S}_{k}(n)=C' + \mathcal{O}\left(\frac{1}{N}\right),
\]
where $C'>0$ is a constant (computed in \eqref{eq:C0j} of Lemma \ref{sec4:2}). 
Plugging it into \eqref{eq:L2-Psik}, we reduce to the evaluation of 
\begin{align}
\notag
\int_{0}^{1/2h} |\widetilde{\Psi}_{k}(z,\chi)|^2\,\dx\alpha \notag
&\ll  \int_{0}^{1/2h} \biggl|\sum_{\substack{\ell\ge 1 \\ (\ell,q)=1}} \chi(\ell)z^\ell\cdot \biggl(\psi_{k}(\ell)-\mathfrak{S}_{k}(\ell)\frac{\ell^{k-1}}{(k-1)!}\biggr)\biggr|^2\,\dx\alpha \\ \notag
&\qquad+\int_{0}^{1/2h}\biggl| \sum_{\substack{\ell\ge 1 \\ (\ell,q)=1}} \chi(\ell)z^\ell\cdot\mathfrak{S}_{k}(\ell)\frac{\ell^{k-1}}{(k-1)!}\biggr|^2\,\dx\alpha \\ \notag
&\qquad+\int_{0}^{1/2h} \sum_{\substack{\ell\ge 1 \\ (\ell,q)=1}} |z|^{2\ell}\cdot\ell^{2(k-3/2)}\,\dx\alpha \\ \label{est-psi-tilde}
&=:(a)+(b)+(c),
\end{align}
say. For the study of $(a)$, we refer to the following section \S\,\,\ref{est-a}, where we base on $\ell^2$-estimates for $\psi_{k}(\ell)-\mathfrak{S}_{k}(\ell)\ell^{k-1}$. For the evaluation of $(b)$, we first apply Gallagher's Lemma in \S\,\,\ref{esti-b}. Then, we will require Abel's discrete partial summation formula; in this way, we will able to control the error term due the approximation of the average of $\mathfrak{S}_{k}$ by Lemma \ref{sec4:2}. As a result, we will see in \S\, \ref{est-errC1} and \S\,\ref{est-errC2} that 
\begin{align*}
(b)\ll \frac{1}{h^2}\int_{-\infty}^{+\infty}\biggl|\sum_{x\le \ell\le x+h}\chi(\ell)\cdot \e^{-\ell/N} \ell^{k-1} \biggr|^2\,\dx x.    
\end{align*}
In conclusion, $(c)$ can be treated as $I_{1,j}$ and $I_{2,j}$ in \S\,\ref{est-I1h} and \S\,\ref{est-I2h}.
\subsubsection{Evaluation of $(a)$}
\label{est-a}
In order to study $(a)$, we notice that, for $k>4$, the asymptotic behaviour in \eqref{sec3:1} is exactly computed in $(1.10)$ of Theorem $2$ of Friedlander and Goldston \cite{fri97}: it is formulated in terms of 
\[
r_{k}(\ell)=\theta_{k}(\ell):=\sum_{p_{1}+\dots+p_{k}=\ell}\log p_{1}\cdots \log p_{k},
\]
but it can still be used for our purposes, due to the asymptotic $\psi_{k}(\ell)-\theta_{k}(\ell)=\mathcal{O}(\ell^{k-1/2})$, as $\ell\to\infty$.
Thus,
\begin{align}
\notag
\sum_{\ell\ge 1}  &|\chi(\ell)\cdot z^{\ell}|^2 \left|\psi_{k}(\ell)-\frac{\mathfrak{S}_{k}(\ell)}{(k-1)!}\ell^{k-1}\right|^2\ll\sum_{\ell\ge 1}\e^{-2\ell/N}\ell^{2k-3} \ll N^{2k-2} \implies \\ \notag
&\left(\int_{0}^{1/2h} \sum_{\ell\ge 1}  |\chi(\ell)\cdot z^{\ell}|^2 \left|\psi_{k}(\ell)-\frac{\mathfrak{S}_{k}(\ell)}{(k-1)!}\ell^{k-1}\right|^2\,\dx x\right)^{1/2} \\ \label{j>4}
&\asymp \left(\int_{0}^{1/2h} \mathcal{O}(N^{2k-2})\,\dx x\right)^{1/2}
\asymp \frac{N^{k-1}}{h^{1/2}}=\frac{N^{j-2}}{h^{1/2}}.
\end{align}
Hence, plugging it into \eqref{eq:|I2|}, (after the application of the Cauchy-Schwarz inequality), together with the bound in \eqref{eq:Psi-conj}, we find that its contribution is
\begin{align}
\label{bound-a-k>4}
\ll N\sum_{0\le t<\log_{2}N} 2^{-t} \cdot N^{j-2} N^{-1/2}\cdot 2^{t/2} \cdot 2^{t/2}\log N \ll N^{j-3/2}\log^2 N,
\end{align}
thanks to our choice of $h$ in \eqref{h}.
\begin{remark}
\label{j4}
For $j=4$, $(k=3)$, we cannot apply Theorem $2$ of \cite{fri97}: in fact, the exponent $7/4$, in the error term, is too large for our purposes. In order to reduce to the $(k-3/2)=3/2$-th power, since GRH alone is not enough, we would have to introduce an additional condition, (see \S\,$3$ of \cite{fri97}). However, we will now see that GRH is enough in a mean square sense.
\end{remark}
\noindent By previous Remark \ref{j4}, for small values of $k$, (i.e. $2\le k\le 4$) we need to bound from above the previous squared $\ell^2$-norm. In particular, for $k=2$, (i.e. Goldbach's case), by Montgomery and Vaughan \cite{mon75}, given a constant $B>0$, it holds that 
\[
\sum_{\ell\le N} |\psi_{2}(\ell)-\mathfrak{S}_{2}(\ell)\ell|^2\ll_{B} N^3(\log N)^{-B}.
\]
By Gallagher's lemma, we have that
\begin{align}
\notag
(a)&\ll\frac{1}{h^2}\int_{0}^{h} \sum_{\ell\le x}\e^{-2\ell/N}\cdot |\psi_{2}(\ell)-\mathfrak{S}_{2}(\ell)\ell|^2    \,\dx x \\ \notag
&+ \frac{1}{h^2}\int_{0}^{+\infty}\sum_{x<\ell\le x+h}\e^{-2\ell/N}\cdot |\psi_{2}(\ell)-\mathfrak{S}_{2}(\ell)\ell|^2 \,\dx x \\ \label{j=3}
&=:(a_{1}) + (a_{2}).
\end{align}
For $(a_{1})$, we easily see that
\begin{align*}
(a_{1})^{1/2}\ll&\biggl(\frac{1}{h^2}\int_{0}^{h}\sum_{\ell\le x} |\chi(\ell)z^\ell\cdot (\psi_{2}(\ell)-\mathfrak{S}_{2}(\ell)\ell)|^2\, \dx x\biggr)^{1/2} \ll_{B} \frac{N^{3/2}}{h^{1/2}}
\end{align*}
and, arguing as in \eqref{bound-a-k>4}, we conclude that also this case the contribution in \eqref{eq:|I2|} is
\[
\ll N\sum_{0\le t<\log_{2}N} 2^{-t} \cdot N^{3/2} N^{-1/2}\cdot 2^{t/2} \cdot 2^{t/2}\log N \ll N^{2}\log^{2} N,
\]
by our choice of $h$ in \eqref{h}.
For $(a_{2})$, we apply the partial summation formula 
\begin{align*}
(a_{2})&=\e^{-2(x+h)/N}\sum_{\ell\le x+h} |\psi_{2}(\ell)-\mathfrak{S}_{2}(\ell)\ell|^2  - \e^{-2x/N}\sum_{\ell\le x} |\psi_{2}(\ell)-\mathfrak{S}_{2}(\ell)\ell|^2 \\
&\quad-\frac{2}{N}\int_{x}^{x+h}\sum_{\ell\le t} |\psi_{2}(\ell)-\mathfrak{S}_{2}(\ell)\ell|^2 \e^{-2t/N}\,\dx t \\ 
&\ll_{k} hx^2\e^{-2x/N}.
\end{align*}
As before, we conclude that
\begin{align*}
(a_{2})^{1/2}\ll \biggl(\frac{1}{h^2}\int_{0}^{+\infty}hx^2\e^{-2x/N}\,\dx x \biggr)^{1/2} \ll \frac{N^{3/2}}{h^{1/2}}, 
\end{align*}
which implies that the contribution of $(a_{2})$ to \eqref{eq:|I2|} is
\begin{align*}
\ll N\sum_{0\le t<\log_{2}N} 2^{-t} N \cdot 2^{t/2}\cdot 2^{t/2}\log N \ll N^2 \log^2 N.
\end{align*}
Instead, for $k=3,4$, we refer to Theorem 3 of \cite{fri97}, that we recall below, and then we argue as in \eqref{j=3} and \eqref{bound-a-k>4}.
\begin{theorem}[Friedlander and Goldston \cite{fri97}]
\label{th2}
Assume GRH. Then,
\begin{align*}
&\sum_{\ell\le N}\left|r_{3}(\ell)-\frac{1}{2}\mathfrak{S}_{3}(\ell)\ell^2 \right|^{2} \ll N^{4}\log^{7}N, \\  
&\sum_{\ell\le N}\left|r_{4}(\ell)-\frac{1}{6}\mathfrak{S}_{4}(\ell)\ell^3\right|^{2} \ll N^{6}\log^4 N.
\end{align*}
\end{theorem}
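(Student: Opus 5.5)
The plan is to apply Parseval to the generating function of $r_k(\ell)$, for $k=3,4$, on the unit interval. Let
\[
S(\alpha)=\sum_{p\le N}\log p\,\e(p\alpha),
\]
so that for $\ell\le N$ we have $r_k(\ell)=\int_0^1 S(\alpha)^k\e(-\ell\alpha)\,\dx\alpha$. Let $M(\alpha)$ be a major-arc approximant. Concretely, fix the parameter $Q=N^{1/2}$ and use Dirichlet's theorem to write $\alpha=a/s+\beta$ with $s\le Q$ and $|\beta|\le 1/(sQ)$. On such an arc set
\[
M(\alpha)=\frac{\mu(s)}{\phi(s)}\,T(\beta),\qquad T(\beta)=\sum_{n\le N}\e(n\beta),
\]
and set $M=0$ elsewhere, or use a smooth partition of unity to avoid boundary issues. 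The first step is to check that
\[
\int_0^1 M(\alpha)^k\e(-\ell\alpha)\,\dx\alpha=\frac{1}{(k-1)!}\,\mathfrak{S}_k(\ell,Q)\,\ell^{k-1}+\text{(error)},
\]
where $\mathfrak{S}_k(\ell,Q)$ is the singular series truncated at $s\le Q$. Both the truncation tail $\mathfrak{S}_k(\ell)-\mathfrak{S}_k(\ell,Q)$ and the error from extending the $\beta$-integral must be controlled in mean square over $\ell\le N$. For the tail I would expand $|\mathfrak{S}_k(\ell)-\mathfrak{S}_k(\ell,Q)|^2$ and sum over $\ell$ with Ramanujan-sum orthogonality. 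Since $k\ge3$, the local factors are $O(\phi(s)^{-(k-1)})$, so this tail is tiny, of the order of a power saving. Its total contribution is therefore far below $N^{2k-2}$.

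The core step is Parseval followed by a factorisation:
\[
\sum_{\ell\le N}\Bigl|r_k(\ell)-\int_0^1 M^k\e(-\ell\alpha)\,\dx\alpha\Bigr|^2\le\int_0^1|S^k-M^k|^2\,\dx\alpha\ll\int_0^1|S-M|^2\bigl(|S|+|M|\bigr)^{2k-2}\,\dx\alpha .
\]
Next pull out the sup norms: $\|S\|_\infty,\|M\|_\infty\ll N$ is used $2k-4$ times. This reduces everything to the single weighted estimate
\[
\int_0^1|S-M|^2\bigl(|S|^2+|M|^2\bigr)\,\dx\alpha\ll N^2\log^{c}N .
\]
Multiplying back by the factor $N^{2k-4}$ then gives $N^{2k-2}\log^c N$, which is $N^4\log^7N$ for $k=3$ and $N^6\log^4N$ for $k=4$ once the log powers are tracked.

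For the weighted estimate I will use GRH through the explicit formula. On the arc around $a/s$ one has
\[
S(\alpha)-M(\alpha)=\frac{1}{\phi(s)}\sum_{\chi\bmod s}\tau(\overline{\chi})\chi(a)\,W(\chi,\beta),
\]
with $W$ the zero-sum part, as in Lemma~2 and Lemma~6 of \cite{lan12} used in Lemma~\ref{l_Psi_chi0}. GRH yields $\int_{|\beta|\le 1/(sQ)}|W(\chi,\beta)|^2\,\dx\beta\ll N\log^2N$ uniformly in $\chi$; this is Gallagher's lemma plus the GRH bound for $\psi(x+h,\chi)-\psi(x,\chi)$ in mean square. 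Summing over characters and over $a$, $s\le Q$ gives $\int_0^1|S-M|^2\ll N\log^{c}N$. The weight $|S|^2+|M|^2$ cannot simply be bounded by $N^2$ pointwise. Instead I would split dyadically according to $|\beta|\sim 2^{r}/N$, using
\[
|M(\alpha)|\ll\frac{1}{\phi(s)}\min\bigl(N,|\beta|^{-1}\bigr)
\]
and the GRH pointwise bound $|S(\alpha)|\ll |M(\alpha)|+(sN)^{1/2}(1+N|\beta|)^{1/2}\log^2N$. Large weight occurs only near $\beta=0$ with small $s$, where the local $L^2$ of $S-M$ is correspondingly small. So the sum over dyadic ranges collapses to $N^2$ times powers of $\log N$.

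The main obstacle is exactly this weighted bound, namely avoiding a loss of a power of $N$ when $|S|^2$ multiplies $|S-M|^2$ on the minor-arc part (large $s$). There I would try first to replace the weight by the trivial $L^2$ fact $\int_0^1|S|^2\ll N\log N$, combined with the GRH minor-arc sup bound $\sup|S-M|\ll N^{3/4}\log^2N$ for $Q=N^{1/2}$, interpolating as needed. If this loses too much I would lower $Q$ and retune the arcs, accepting extra logarithms.
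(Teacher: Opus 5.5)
The paper gives no proof of this statement. It imports it as Theorem 3 of Friedlander--Goldston \cite{fri97}, so your argument has to stand on its own. The set-up is fine: Bessel's inequality for $S^k-M^k$, the Farey arcs, the truncated singular series and the error from completing the $\beta$-integrals are all harmless for $k=3,4$.

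\textbf{The gap is in the reduction step.} Once you spend the trivial bound $\|S\|_\infty,\|M\|_\infty\ll N$ on $2k-4$ factors, you need $\int_0^1|S-M|^2(|S|^2+|M|^2)\,\dx\alpha\ll N^2\log^cN$. This is essentially your own argument run with $k=2$. It would give $\sum_{\ell\le N}|r_2(\ell)-\mathfrak{S}_2(\ell)\ell|^2\ll N^2\log^cN$, and hence only $O(\log^{c+1}N)$ even numbers up to $N$ that are not sums of two primes, on GRH alone. To my knowledge, GRH gives only about $N^{5/2}$ for that mean square and about $N^{1/2}$ for the exceptional set (up to logarithms).

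Your fallback shows the loss explicitly. On the minor arcs $|S-M|^2|S|^2$ is essentially $|S-M|^4$, and $\sup|S-M|^2\int|S|^2\ll N^{5/2}\log^cN$ is too large by $N^{1/2}$. Retuning $Q$ cannot help, since the GRH bound $N^{1/2}(Q+N/Q)^{1/2}\log^2N$ is smallest at $Q=N^{1/2}$. A warning sign is that your reduction never uses $k\ge 3$.

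\textbf{The repair} is to spend the GRH sup bound, not the trivial one, on the surplus factors. From $|S|\le|M|+|S-M|$ you get
\[
|S^k-M^k|^2\ll|S-M|^2|M|^{2k-2}+|S-M|^{2k}.
\]
For the second term,
\[
\int_0^1|S-M|^{2k}\,\dx\alpha\le\bigl(\sup|S-M|\bigr)^{2k-2}\int_0^1|S-M|^2\,\dx\alpha\ll N^{3(k-1)/2+1}\log^cN .
\]
This is $\ll N^{2k-2}\log^cN$ precisely when $k\ge 3$, which is where that hypothesis enters.

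For the first term your dyadic Gallagher argument does work, because the weight is now $|M|^{2k-2}$. On the arcs with denominator $s$, orthogonality in $a$ together with GRH gives
\[
\sum_a\int_{|\beta|\le 2^{r+1}/N}|S-M|^2\,\dx\beta\ll s\,2^r\log^2N .
\]
Meanwhile $|M|\ll\phi(s)^{-1}N2^{-r}$ on $2^r/N<|\beta|\le 2^{r+1}/N$, and $|M|\ll\phi(s)^{-1}N$ for $|\beta|\le 1/N$. Summing over $r$ and then over $s$ gives $N^{2k-2}\log^2N$, since $\sum_s s\,\phi(s)^{2-2k}$ converges for $k\ge 3$.

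\textbf{Logarithmic powers.} With the inputs you list, $\sup|S-M|\ll N^{3/4}\log^2N$ and $\int_0^1|S-M|^2\ll N\log N$, the term $\int_0^1|S-M|^6$ comes out as $N^4\log^9N$ for $k=3$. Reaching the exponent $7$ in the statement therefore needs sharper logarithmic inputs than the ones you indicate.
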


\begin{remark}
\label{nguyen}    
The recent work of Nguyen \cite{ngu26} gives an asymptotic estimate for $G_{k}(N)$, for every $k\ge 2$; however, here we are looking for punctual estimates of $\psi_{k}(n)$ or for $\ell^2-$norm upper bounds of $|\psi_{k}(\ell)-\mathfrak{S}_{k}(\ell)\cdot\ell^{k-1}|$. 
\end{remark}

\subsubsection{Evaluation of $(b)$}
\label{esti-b}
By Gallagher's Lemma and adding and subtracting the average $C'=2$ of the generalized singular series, (see Lemma \ref{sec4:2} for the computation of $C'$), we get
\begin{align}
\notag
(b)&\ll_{j} \frac{1}{h^2}\int_{-\infty}^{+\infty}\biggl|\sum_{x<\ell\le x+h} \chi(\ell)e^{-\ell/N}\ell^{j-2}(\mathfrak{S}_{j-1}(\ell)-C') \biggr|^2\,\dx x \\  \label{est-b}
&\qquad+ \frac{1}{h^2}\int_{-\infty}^{+\infty}\biggl|\sum_{x<\ell\le x+h} \chi(\ell)e^{-\ell/N}\ell^{j-2} \biggr|^2\,\dx x.
\end{align}
Since we will find an upper-bound of the first summand of \eqref{est-b} in \S\,\,\ref{est-err}, now we focus on the second one. We have
\begin{align}
\notag
\frac{1}{h^2}\int_{-\infty}^{+\infty}\biggl|\sum_{x<\ell\le x+h} \chi(\ell)e^{-\ell/N}\ell^{j-2} \biggr|^2\,\dx x&\ll \frac{1}{h^2}\int_{-\infty}^{+\infty} \left|\sum_{x<\ell\le x+h}\chi(\ell)\cdot \ell^{j-2}\e^{-\ell/N} \right|^2\,\dx x \\ \notag
&=\frac{1}{h^2}\int_{0}^{h} \left|\sum_{\ell\le x}\chi(\ell)\cdot\ell^{j-2}\e^{-\ell/N} \right|^2\,\dx x \\ \notag
&\quad+ \frac{1}{h^2}\int_{0}^{+\infty} \left|\sum_{x<\ell\le x+h}\chi(\ell)\ell^{j-2}\e^{-\ell/N} \right|^2\,\dx x \\ \label{est-L2-Psik}
&=:\frac{1}{h^2}(I_{1,j}(N,h)+I_{2,j}(N,h)).
\end{align}

\subsubsection{Estimate of \texorpdfstring{$I_{1,j}(N,h)$}{I(1,j)}}
\label{est-I1h}
By partial summation
\begin{align*}
\sum_{\ell\le x}\chi(\ell)\cdot\ell^{j-2}\e^{-\ell/N} &= \biggl(\sum_{\ell\le x}\chi(\ell) \biggr)x^{j-2}\e^{-x/N}-\int_{1}^{x}\biggl(\sum_{\ell\le t}\chi(\ell) \biggr)\frac{d}{dt}(t^{j-2}\e^{-t/N})\,\dx t \\ 
&= \biggl(\sum_{\ell\le x}\chi(\ell) \biggr)x^{j-2}\e^{-x/N}+\frac{1}{N}\int_{1}^{x} \biggl(\sum_{\ell\le t}\chi(\ell) \biggr)t^{j-2}\e^{-t/N}\,\dx t \\ 
&\qquad - (j-2)\int_{1}^{x} \biggl(\sum_{\ell\le t}\chi(\ell) \biggr)t^{j-3}\e^{-t/N}\,\dx t.
\end{align*}
By Polya-Vinogradov's inequality and the explicit computation of the integral, we notice that
\begin{align*}
\frac{1}{N}\int_{1}^{x} \biggl(\sum_{\ell\le t}\chi(\ell) \biggr)t^{j-2}\e^{-t/N}\,\dx t \ll_{q} \frac{1}{N}\int_{1}^{x} t^{j-2}\e^{-t/N}\,\dx t \ll_{q,j}x^{j-2}\e^{-x/N}
\end{align*}
and that
\begin{align*}
 \int_{1}^{x} \biggl(\sum_{\ell\le t}\chi(\ell) \biggr)t^{j-3}\e^{-t/N}\,\dx t \ll_{q}  \int_{1}^{x}t^{j-3}\e^{-t/N}\,\dx t\ll_{q,j} Nx^{j-3}.
\end{align*}
Thus, we can bound the square of the previous sum 
\begin{align*}
 \biggl|\sum_{\ell\le x}\chi(\ell)\cdot\ell^{j-2}\e^{-\ell/N}\biggr|^2 \ll_{q,j} N^2  x^{2(j-3)}.
\end{align*}

Integrating between $0$ and $h$ and dividing by $h^2$, we conclude that
\begin{align*}
\frac{I_{1,j}(N,h)}{h^2}\ll_{j,q} \frac{N^2}{h^2}h^{2j-5}=\frac{N^{2j-5}}{(2^{t+2})^{2j-7}}\ll \frac{N^{2j-5}}{2^{t(j-7)}},
\end{align*}
 by our choice of $h$ in \eqref{h}. Thus, also recalling \eqref{eq:Psi-conj}, the contribution of $I_{1,j}(N,h)$ to \eqref{eq:|I2|} is 
 \begin{align*}
&\ll_{q,j}N\sum_{0\le t< \log_{2}N} 2^{-t} N^{j-5/2} (2^{-t})^{j-7/2} \cdot 2^{t/2}\log N\asymp_{q,j} N^{j-3/2}\sum_{0\le t< \log_{2}N} 2^{-t(j-3)}\\ &\ll_{q,j}N^{j-3/2}\log^2 N < N^{j-1}\log^2 N. 
 \end{align*}
\subsubsection{Estimate of \texorpdfstring{$I_{2,j}(N,h)$}{I(2,j)}}
\label{est-I2h}
By partial summation,
\begin{align*}
&\sum_{x<\ell\le x+h} \chi(\ell)\ell^{j-2}\e^{-\ell/N} = \biggl(\sum_{\ell\le x+h} \chi(\ell)\biggr)(x+h)^{j-2}\e^{-(x+h)/N}-\biggl(\sum_{\ell\le x} \chi(\ell)\biggr)x^{j-2}\e^{-x/N}  \\ 
&\qquad+ \int_{x}^{x+h}\biggl(\sum_{\ell\le t}\chi(\ell)\biggr)\frac{d}{dt}(t^{j-2}\e^{-t/N})\,\dx t =:(A)+(B).
\end{align*}
For $(B)$, arguing as in \S\, \ref{est-I1h}, we have
\begin{align*}
(B)&\ll_{q,j} \frac{1}{N}\int_{x}^{x+h} \e^{-t/N}t^{(j-2)}\,\dx t -\int_{x}^{x+h} \e^{-t/N}t^{j-3}\,\dx t.
\end{align*}
Now, by integration by parts, the first integral is 
\begin{align*}
&\ll_{j,q} 
[(x+h)^{(j-2)}\e^{-(x+h)/N}-x^{(j-2)}\e^{-x/N}]\ll_{q,j}hx^{j-3}\e^{-x/N},
\end{align*}
while the second one is
\begin{align*}
&\ll_{q,j} N\cdot [(x+h)^{(j-3)}\e^{-(x+h)/N}-x^{(j-3)}\e^{-x/N}]\ll_{q,j}Nhx^{j-4}e^{-x/N}. 
\end{align*}
Since $x\to +\infty$, $hx^{j-3}\ge Nhx^{j-4}$, we get that
\begin{align*}
\frac{1}{h^2}\int_{0}^{+\infty} (hx^{j-3}e^{-x/N})^2\,\dx x \asymp_{q,j}N^{2j-5}\int_{0}^{+\infty}u^{2(j-3)}e^{-u}\,\dx u \ll_{q,j} N^{2j-5},  
\end{align*}
thanks to the usual change of variable $u=2x/N$ and our choice of $h$ in \eqref{h}.
In order to bound $(A)$, we refer again to Pólya–Vinogradov inequality and we find that
\begin{align*}
(A)^2&\ll_{q,j} [(x+h)^{2(j-2)}\e^{-2(x+h)/N}+x^{2(j-2)}\e^{-2x/N}].
\end{align*}
Thus, as for $(B)$, we get that
\[
\frac{1}{h^2}\int_{0}^{+\infty}(A)^2\,\dx x \ll_{j,q} \frac{N^{2j-3}}{h^2}=N^{2j-5}\cdot 2^{2(t+2)}.
\]
As a result, summing the estimates of $(A)$ and $(B)$, we have that
\begin{align*}
\frac{I_{2,j}(N,h)}{h^2}&\le\frac{1}{h^2}\int_{0}^{+\infty} [(A)^2+(B)^2]\,\dx x \ll_{j,q} N^{2j-5}(1 + 2^{2(t+2)})\ll N^{2j-5}\cdot 2^{2t}.
\end{align*}
Summing over $t$, as in \cite{ike25}, we conclude that the entire contribution of $I_{2,j}(N,h)$ to \eqref{eq:|I2|} is
\begin{align}
\notag
&\ll_{j,q} N\sum_{0\le t< \log_{2}N} \frac{2^{(3/2)t}}{2^{t}} (N^{j-5/2}\cdot \log N) \ll_{j,q} N^{j-3/2}\log N \sum_{0\le t< \log_{2}N}2^{t/2} \\ 
&< N^{j-1}\log N.
\label{I2-fin}
\end{align}

\subsubsection{Estimates of the error terms in \texorpdfstring{$\mathcal{I}_{2}$}{I2}}
\label{est-err}
In conclusion, we compute the contribution of the remaining error terms in $\widetilde{\Psi}_{j-1}(z,\chi)$, i.e. we focus on the first summand in \eqref{est-b} of \S\,\ref{esti-b}. In what follows, the asymptotic (quadratic) average of $\mathfrak{S}_{j-1}$ will be required.
Recalling definition \eqref{def-Psik} and the estimate in \eqref{est-psi-tilde}, by Gallagher's Lemma and by the triangle inequality, adding and subtracting the average of the singular series, we have the inequality in \eqref{est-b}. In order to have the thesis, we have to find an upper bound for
\begin{align}
\notag
\frac{1}{h^2}&\int_{-\infty}^{+\infty}\biggl|\sum_{x<\ell\le x+h} \chi(\ell)e^{-\ell/N}\ell^{j-2}(\mathfrak{S}_{j-1}(\ell)-C') \biggr|^2\,\dx x \\ \notag
&=\frac{1}{h^2}\int_{0}^{h}\biggl|\sum_{\ell\le x} \chi(\ell)e^{-\ell/N}\ell^{j-2}(\mathfrak{S}_{j-1}(\ell)-C') \biggr|^2\,\dx x \qquad \\  \notag
&\qquad+\frac{1}{h^2}\int_{0}^{+\infty}\biggl|\sum_{x<\ell\le x+h} \chi(\ell)e^{-\ell/N}\ell^{j-2}(\mathfrak{S}_{j-1}(\ell)-C') \biggr|^2\,\dx x \\ \label{C-D} 
&=:(C_{1})+(C_{2}).
\end{align}

\subsubsection{Evaluation of \texorpdfstring{$C_{1}$}{C1}} 
\label{est-errC1}
By partial summation, with $A(x)=\sum_{\ell\le x}\chi(\ell)(\mathfrak{S}_{j-1}(\ell)-C')$ and $f(\ell)=\ell^{j-2}\e^{-\ell/N}$, we get
\begin{align}
\notag
\sum_{\ell\le x}\chi(\ell)(&\mathfrak{S}_{j-1}(\ell)-C')\ell^{j-2}\e^{-\ell/N} = A(x)f(x)-\int_{1}^{x}A(t)f'(t)\,\dx t \\ \label{C-1-i}\quad\implies 
&\sum_{\ell\le x}\chi(\ell)(\mathfrak{S}_{j-1}(\ell)-C')f(\ell) =A(x) f(x) \\  \label{C-1-ii}
&\qquad+\int_{1}^{x}A(t) t^{(j-2)}\e^{-t/N}((j-2)t^{-1} - N^{-1})\,\dx t. 
\end{align}
To estimate $A(x)$, we use the discrete Abel partial summation formula
\[
\sum_{\ell=1}^{\lfloor x \rfloor} u_{\ell}v_{\ell}=u_{\lfloor x \rfloor+1}\sum_{\ell=1}^{\lfloor x \rfloor} v_{\ell} - \sum_{\ell=1}^{\lfloor x \rfloor} (u_{\ell+1}-u_{\ell})\sum_{s=1}^\ell v_{s},
\]
with $u_{\ell}=(\mathfrak{S}_{j-1}(\ell)-C')$ and $v_{\ell}=\chi(\ell)$. In this way, we have
\begin{align}
\notag
\sum_{\ell\le x}\chi(\ell)(\mathfrak{S}_{j-1}(\ell)-C') &= (\mathfrak{S}_{j-1}(\lfloor x \rfloor+1)-C') \cdot \sum_{\ell=1}^{\lfloor x \rfloor}\chi(\ell) \\ \label{Abel-S}
&\qquad+\sum_{\ell=1}^{\lfloor x \rfloor} (\mathfrak{S}_{j-1}(\ell+1)-\mathfrak{S}_{j-1}(\ell))\sum_{s=1}^{\ell}\chi(s) .
\end{align}

By definition \eqref{eq:sigmaj} and standard estimates, we have that $\mathfrak{S}_{j-1}(\ell) \ll 1$.
Then we highlight the following property of the singular series, which can be found out by direct computation:
\[
\mathfrak{S}_{j-1}(\ell)\ne 0 \iff j-1\equiv \ell \mod{2}.
\]
Since $\ell$ is even if and only if $\ell+1$ is odd (and viceversa), we can simplify our estimates, as
\begin{align*}
(\mathfrak{S}_{j-1}(\ell+1)-\mathfrak{S}_{j-1}&(\ell))=\mathfrak{S}_{j-1}(\ell) 
\\  \Downarrow \\ 
\sum_{\ell\le \lfloor x \rfloor}(\mathfrak{S}_{j-1}(\ell+1)-\mathfrak{S}_{j-1}(\ell)) &=\sum_{\ell\le \lfloor x \rfloor}\mathfrak{S}_{j-1}(\ell) \ll_{q,j} x,
\end{align*}
where the last inequality follows from Lemma \ref{sec4:2}. 
Therefore, by previous argument and recalling the Pólya–Vinogradov inequality, we have the following bound for \eqref{Abel-S}
{\small
\begin{equation}
\label{sec3:3}
\sum_{\ell\le x}\chi(\ell)(\mathfrak{S}_{j-1}(\ell)-C') \ll_{q,j} x.
\end{equation}
}
Furthermore, by Lemma \ref{sec4:3}, we also know that 
\begin{align*}  
 \sum_{\ell\le \lfloor x\rfloor }\mathfrak{S}_{j-1}(\ell)^2 \ll_{j} x,
\end{align*}
which implies that 
\begin{align}
 \label{est-sq-s} 
 \sum_{\ell\le \lfloor x\rfloor }|\chi(\ell)(\mathfrak{S}_{j-1}(\ell)-C')|^2\ll_{j}x.
\end{align}
\newline As a consequence, \eqref{C-1-i} and \eqref{C-1-ii} are bounded by
\begin{align*}
&\ll_{q} x^{j-1}\e^{-x/N}+\frac{1}{N}\int_{1}^{x} t\cdot t^{j-2}\e^{-t/N}\,\dx t +\int_{1}^{x} t\cdot t^{j-3}\e^{-t/N}\,\dx t.
\end{align*}
Arguing as in \S\,\ref{est-I1h}, we have that
\begin{align*}
\frac{1}{N}&\int_{1}^{x} t\cdot t^{j-2}\e^{-t/N}\,\dx t\ll x^{j-1}\e ^{-x/N},  \\
&\int_{1}^{x}t\cdot t^{j-3}\e^{-t/N}\,\dx t \ll N x^{j-2}e^{-x/N}.
\end{align*}
Recalling that, in this range for $x$, we have $Nx^{j-2}\ge x^{j-1}$, integrating by parts between $0$ and $h$ and dividing by $h^2$, we get
\begin{align*}
(C_{1}) &\ll_{q,j} \frac{N^2}{h^2}\int_{0}^{h} x^{2(j-2)}\e^{-2x/N}\,\dx x \\  &\ll_{q,j}\frac{1}{h^2} N^2 h^{2j-4}\cdot \e^{-2h/N}\\ 
&\ll_{q,j} N^2 \cdot h^{2j-6}\asymp \frac{N^{2j-4}}{(2^{t+2})^{2j-6}}.
\end{align*}
As a result, inserting in \eqref{I2-fin}, we get an error term
\begin{align}
\notag
&\ll_{q,j} N\sum_{0\le t<\log_{2}N} 2^{-t} N^{j-2}\cdot 2^{-(t+2)(j-3)} \cdot 2^{t/2\,+1}\log N \\ \label{est-C1} &\ll N^{j-1}\log N\sum_{0\le t<\log_{2}N}  2^{-t(j-5/2)}\ll N^{j-1}\log^2 N.  
\end{align}

\subsubsection{Evaluation of \texorpdfstring{$C_{2}$}{C2}}
\label{est-errC2}
By partial summation and Cauchy-Schwarz's inequality, with $A(x)=\sum_{\ell\le x}\chi(\ell)(\mathfrak{S}_{j-1}(\ell)-C')$ and $f(x)=x^{j-2}\e^{-x/N}$,
\begin{align*}
\biggl|\sum_{x<\ell\le x+h} \chi(\ell)\e^{-\ell/N}\ell^{j-2}(\mathfrak{S}_{j-1}(\ell)-C') \biggr|^2 &\le 2\left| A(x+h)(x+h)^{j-2}\e^{-(x+h)/N} - A(x)x^{j-2}\e^{-x/N}\right|^2 \\ 
&\quad+2\biggl| \int_{x}^{x+h} A(t) f'(t)\,\dx t\biggr|^2 \\ 
&\ll\left| A(x+h)(x+h)^{j-2}\e^{-(x+h)/N} - A(x)x^{j-2}\e^{-x/N}\right|^2  \\ 
&\quad+\frac{h}{N^2}\int_{x}^{x+h} |A(t)|^2 t^{2(j-2)}\e^{-2t/N}\,\dx t \\
&=: (C_{2,1})+ (C_{2,2}).
\end{align*}
Basing on \eqref{est-sq-s} and by the mean value theorem, we have
\begin{align*}
(C_{2,1})&\ll_{q,j} \frac{h}{N^2}\int_{x}^{x+h} t\cdot t^{2(j-2)}\e^{-2t/N}\,\dx t \\   &\ll_{q,j} N^{-2}h^2\cdot (\xi')^{2j-3}\e^{-2\xi'/N},
\end{align*}
for some $\xi'\in (x,x+h)$.
Therefore, 
\begin{align}
\notag
\frac{1}{h^2}\int_{0}^{+\infty}(C_{2,1})\,\dx x&\ll_{q,j}\frac{1}{h^2}\int_{0}^{+\infty} N^{-2}h^2\cdot (\xi')^{2j-3}\e^{-2\xi'/N}\,\dx x \\ \label{est-C21} &\ll N^{2j-4}, 
\end{align}
by the usual change of variable $u=2\xi'/N$. Thus, the contribution of $(C_{2,1})$ is negligible as well, since
\begin{align*}
N\sum_{0\le t<\log_{2}N} 2^{-t} N^{j-2} \cdot 2^{t/2} \log N < N^{j-1}\log^2 N. 
\end{align*}
For $(C_{2,2})$, we first add and subtract $A(x+h)(x+h)^{j-2}\e^{-x/N}$ and $A(x)(x+h)^{j-2}\e^{-x/N}$ to the absolute value and we repeatedly apply the inequality $|a+b|^2 \le 2|a|^2 + 2|b|^2$:
\begin{align}
\notag
(C_{2,2})&\le 2|A(x+h)|^2(x+h)^{2(j-2)}|\e^{-(x+h)/N}-\e^{-x/N}|^2 \\  \notag
&\quad+ 4(x+h)^{2(j-2)}\e^{-2x/N} |A(x+h)-A(x)|^2 \\ \label{C22}
&\quad+4|A(x)|^2\e^{-2x/N} |(x+h)^{j-2} - x^{j-2}|^2.
\end{align}
By \eqref{sec3:3}, we know that $|A(x)|^2 \ll x$, $|A(x+h)|^2 \ll x+h$. Using these upper bounds, we obtain that 
\begin{align*}
|A(x+h)-A(x)|^2&\le3\biggl|[\mathfrak{S}_{j-1}(\lfloor x+h \rfloor+1)-C']\sum_{\ell=\lfloor x \rfloor+1}^{\lfloor x+h \rfloor}\chi(\ell)\biggr|^2 \\ 
&\quad+3\biggl|[\mathfrak{S}_{j-1}(\lfloor x+h \rfloor +1)-\mathfrak{S}_{j-1}(\lfloor x \rfloor+1)]\sum_{\ell\le \lfloor x \rfloor}\chi(\ell)\biggl|^2  \\ 
&\quad + 3\biggl|\sum_{\ell=\lfloor x \rfloor+1}^{\lfloor x+h \rfloor}\mathfrak{S}_{j-1}(\ell)\sum_{s\le \ell} \chi(s)\biggr|^2 \ll_{q,j} h.
\end{align*}
Again by the mean value theorem, we estimate 
\begin{align*}
|(x+h)^{j-2} - x^{j-2}|^2 \ll h^2\xi^{2(j-3)}, 
\end{align*}
for some $\xi\in (x,x+h)$.
Moreover, it trivially holds that $|\e^{-(x+h)/N}-\e^{-x/N}|^2 \le e^{-2x/N}$.
Rewriting \eqref{C22}, integrating between $0$ and $\infty$ and multiplying by $h^{-2}$, we get that
\begin{align*}
\frac{1}{h^{2}}\int_{0}^{+\infty}(C_{2,2})&\,\dx x \ll_{q,j}\frac{1}{h^{2}}\int_{0}^{+\infty}(x+h)^{2j-3}\e^{-2x/N}\,\dx x \\ 
&\quad\qquad+ \frac{1}{h^{2}}\int_{0}^{+\infty}h(x+h)^{2j-4}\e^{-2x/N}\,\dx x \\ 
&\quad\qquad+\frac{1}{h^2}\int_{0}^{+\infty}h^2 \xi^{2j-6} x \e^{-2x/N}\,\dx x.
\end{align*}
By the change of variable $u=x+h$, we have
\begin{align}
\notag
\frac{1}{h^{2}}\int_{0}^{+\infty}(x+h)^{2j-3}\e^{-2x/N}\,\dx x&\ll \frac{1}{h^{2}}\int_{h}^{+\infty}u^{2j-3}\e^{-2(u-h)/N}\,\dx u \\ \label{C22-i}
&\ll \frac{1}{h^2}Nh^{2j-3} \ll N^{2j-4}\cdot 2^{-t(2j-5)}
\end{align}
and similarly
\begin{align}
\label{C22-ii}
\frac{1}{h^{2}}\int_{0}^{+\infty}h(x+h)^{2j-4}\e^{-2x/N}\,\dx x \ll  \frac{N}{h} h^{2j-4}=N^{2j-4}\cdot 2^{-t(2j-5)}. 
\end{align}
Since $\xi\in (x,x+h)$, $\xi\sim x$. Thus
\begin{align}
\label{C22-iii}
\frac{1}{h^2}\int_{0}^{+\infty}h^2 \xi^{2j-6} x \e^{-2x/N}\,\dx x\ll \int_{0}^{+\infty} x^{2j-5}\e^{-2x/N}\,\dx x \ll N^{2j-4}.  
\end{align}
Collecting \eqref{C22-i}, \eqref{C22-ii} and \eqref{C22-iii}, we have that 
\begin{align}
\notag
\frac{1}{h^2}\int_{0}^{+\infty}(C_{2,2})\,\dx x \ll_{q,j} N^{2j-4}\cdot 2^{-t(2j-5)}+ N^{2j-4},
\end{align}
which implies that the error term due to $(C_{2,2})$ is
\begin{align*}
&\ll_{q,j} N\sum_{0\le t<\log_{2}N} 2^{-t} N^{j-2}\cdot 2^{t/2}\log N \\ &\ll N^{j-1} \log N \sum_{0\le t<\log_{2}N}  2^{-t/2} \ll N^{j-1}\log N.
\end{align*}
In conclusion, the approximation $\sum_{\ell\le N} \mathfrak{S}_{k}(\ell)\sim_{j} N$, does not change the order $N^{j-1}$ of the expected error term. 

\section{The Generalized singular series}

\begin{lemma}
\label{sec4:1}
    Let $q\ge 1$ and let $k\ge 2$. Denote by
\[
\phi_{k}(q):=\sum_{h_{1}\in\,\mathbb{Z}^* _{q}}\dots\sum_{h_{k}\in\,\mathbb{Z}^* _{q}}\chi_{0}(h_{1}+\dots+h_{k}).
\]
Then, $\phi_{k}(q)$ is a multiplicative function, given by
\[
\phi_{k}(q)=q^{k}\prod_{p\mid q}\left(1-\frac{1}{p} \right)\left(\left(1-\frac{1}{p} \right)^{k}-\frac{(-1)^{k}}{p^{k}}\right).
\]
\end{lemma}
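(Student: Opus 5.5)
Here $\phi_k(q)$ counts $k$-tuples $(h_1,\dots,h_k)$ of reduced residues modulo $q$ whose sum is again a reduced residue. The plan has two steps: prove multiplicativity via the Chinese Remainder Theorem, then compute the value at prime powers by inclusion--exclusion (equivalently, by character sums).

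\emph{Multiplicativity.} Let $q=q_1q_2$ with $(q_1,q_2)=1$. By the Chinese Remainder Theorem, $\mathbb{Z}_q^*\cong\mathbb{Z}_{q_1}^*\times\mathbb{Z}_{q_2}^*$, and the map respects addition componentwise. Moreover $(h_1+\dots+h_k,q)=1$ holds exactly when the sum is coprime to both $q_1$ and $q_2$. Hence the indicator $\chi_0(h_1+\dots+h_k)$ factors as the product of the principal characters modulo $q_1$ and $q_2$ evaluated at the reduced sums. The $k$-fold sum therefore splits as a product, giving $\phi_k(q)=\phi_k(q_1)\phi_k(q_2)$.

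\emph{Reduction to primes.} It remains to show
\[
\phi_k(p^a)=p^{ak}\Bigl(1-\frac1p\Bigr)\Bigl(\bigl(1-\tfrac1p\bigr)^k-\frac{(-1)^k}{p^k}\Bigr).
\]
Both the condition $h_i\in\mathbb{Z}_{p^a}^*$ and the condition on the sum depend only on residues modulo $p$. Each residue class modulo $p$ lifts to $p^{a-1}$ classes modulo $p^a$, so $\phi_k(p^a)=p^{(a-1)k}\phi_k(p)$. The claimed formula scales in the same way, so it suffices to take $a=1$, where the target is
\[
\phi_k(p)=(p-1)\bigl((p-1)^k-(-1)^k\bigr)/p .
\]

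\emph{Computation at a prime.} I would count the complement. Let $A_k(c)$ be the number of $(h_1,\dots,h_k)\in(\mathbb{F}_p^*)^k$ with $h_1+\dots+h_k=c$. By symmetry under scaling by $\mathbb{F}_p^*$, $A_k(c)$ is the same for all $c\ne0$. Since $\sum_c A_k(c)=(p-1)^k$, we get $\phi_k(p)=(p-1)^k-A_k(0)$. Conditioning on $h_k$ gives the recursion
\[
A_k(0)=(p-1)\,A_{k-1}(c\neq0)=(p-1)^{k-1}-A_{k-1}(0),\qquad A_1(0)=0.
\]
Solving it yields $A_k(0)=\bigl((p-1)^k+(-1)^k(p-1)\bigr)/p$. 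Therefore
\[
\phi_k(p)=(p-1)^k-\frac{(p-1)^k+(-1)^k(p-1)}{p}=\frac{(p-1)\bigl((p-1)^k-(-1)^k\bigr)}{p},
\]
which matches the target. Alternatively, one can expand the condition "sum $\ne0$" via additive characters and use the Ramanujan sum $\sum_{h\in\mathbb{F}_p^*}e(th/p)=-1$ for $t\ne0$. The only real care needed is the bookkeeping in this last step and the lifting factor $p^{(a-1)k}$.
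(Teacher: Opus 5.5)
Your proof is correct, but it does not follow the paper's route. The paper's own argument is only the identification you make in your first sentence. Since $\chi_0(n)=1$ exactly when $(n,q)=1$, the sum counts $k$-tuples of reduced residues whose sum is again a reduced residue, which is T\'oth's generalized Euler function. The paper then quotes T\'oth's Theorem~2.1 for both the multiplicativity and the product formula.

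You instead derive both from scratch. Multiplicativity comes from the Chinese Remainder Theorem. The lift $\phi_k(p^a)=p^{(a-1)k}\phi_k(p)$ holds because every condition depends only on residues modulo $p$. At a prime you use the scaling symmetry of $A_k(c)$ and the recursion $A_k(0)=(p-1)^{k-1}-A_{k-1}(0)$ with $A_1(0)=0$. I checked that its solution $A_k(0)=\bigl((p-1)^k+(-1)^k(p-1)\bigr)/p$ is right, including at $p=2$, and that the final expression matches the claimed formula.

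The citation is shorter and ties $\phi_k$ to an existing literature. Your argument makes the lemma self-contained and elementary, and it shows exactly where the alternating term $(-1)^k/p^k$ comes from. It also makes the vanishing in Remark~\ref{phi-val} transparent. For $p=2$ the only unit is $1$, so $\phi_k(2)$ equals $1$ or $0$ according as $k$ is odd or even. For odd $p$ the factor $(p-1)^k-(-1)^k\ge 2^k-1$ is positive. Hence, by multiplicativity and the lifting step, $\phi_k(q)=0$ precisely when $q$ and $k$ are both even.
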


\begin{proof}
It follows from Theorem 2.1 of \cite{tot22}, noting that $\phi_{k}(q)$ is equal to the generalized Euler function

\[
\phi_{k}(q):=\sum_{\substack{h_{1},\dots,h_{k}\in\,\mathbb{Z}_{q}^* \\ (h_{1}+\dots+h_{k},\,q)=1}} 1,
\]
as $m_{i}\equiv_{q}\,h_{i}$, $(m_{i}, \,q)=1$ $\implies$ $(h_{i},\,q)=1$, $\forall\,i$. 
\end{proof}

\begin{remark}
\label{rem-C}
By direct computation of $C_{q,j}:=\phi_{k}(q)/\phi^k (q)$, we find that 
\begin{equation}
\label{eq:C}
C_{q,j}=\prod_{p\mid q}\frac{p-1}{p}\cdot \begin{cases}
1-(p-1)^{-j+1} & \text{if $j\equiv_{2}\,1$} \\ 
1+(p-1)^{-j+1} &\text{if $j\equiv_{2}\,0$}.
\end{cases}
\end{equation}
\end{remark}
\begin{remark}
\label{phi-val}
By Lemma \ref{sec4:1}, we can verify that $\phi_{k}(q)>0$, unless both $k$ and $q$ are even. In fact, 
\[
\left(1-\frac{1}{p} \right)^{k}-\frac{(-1)^{k}}{p^{k}}=0 \implies 2\mid k\,\wedge \, p=2 \implies 2\mid q.
\]
Such property confirms our previous observations in Remark \ref{cost=0}.
\end{remark}
\begin{definition}
The generalized Goldbach singular series is given by
\begin{align}
\label{eq:sigmaj}
\mathfrak{S}_{j}(\ell)&=\prod_{p\mid \ell} \left(1-\frac{(-1)^{j-1}}{(p-1)^{j-1}} \right)\prod_{p\nmid \ell}\left(1-\frac{(-1)^{j}}{(p-1)^j} \right) \\ \notag
&=\begin{cases}
\mathfrak{S}^{(j)}\prod_{p\mid \ell}\left(1+\dfrac{(-1)^{j}p}{(p-1)^{j}-(-1)^j}\right) & \text{if $j\equiv_{2} \ell$} \\
0 & \text{if $j\not\equiv_{2} \ell$},
\end{cases}
\end{align}
where
\begin{equation}
\label{eq:C0j}
\mathfrak{S}^{(j)}=2\prod_{p\ne 2}\left(1+\frac{(-1)^{j+1}}{(p-1)^j} \right).
\end{equation}
\end{definition}

\noindent Now we generalize formula (17.11) of Lemma 17.4 of \cite{mon71}.
\begin{lemma}
    \label{sec4:2}
    For every $j\ge 3$, the average of the generalized singular series $\mathfrak{S}_{j}(\ell)$ is equal to
\[
\frac{1}{N}\sum_{\ell\le N}\mathfrak{S}_{j}(\ell)=2+\mathcal{O}_{j}\left(\frac{\log N}{N}\right).
\]
\end{lemma}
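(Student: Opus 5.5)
The plan is to write $\mathfrak{S}_{j}(\ell)$ as a convolution and interchange summations, exactly as in the proof of (17.11) of Lemma 17.4 of \cite{mon71}. By \eqref{eq:sigmaj}, $\mathfrak{S}_{j}(\ell)=0$ unless $\ell\equiv j \pmod 2$. For $\ell\equiv j\pmod 2$ we have $\mathfrak{S}_{j}(\ell)=\mathfrak{S}^{(j)} f(\ell)$, where $f$ is multiplicative. The local factor at $p=2$ is absorbed into $\mathfrak{S}^{(j)}$, so $f$ only sees odd primes, with
\[
f(\ell)=\prod_{\substack{p\mid \ell\\ p>2}}\bigl(1+g(p)\bigr),\qquad g(p):=\frac{(-1)^{j}p}{(p-1)^{j}-(-1)^{j}}\ll_j p^{1-j}.
\]
Hence $f=1*g$, where $g$ is supported on odd squarefree $d$ and $g(d)=\prod_{p\mid d}g(p)$. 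Since $j\ge3$, we have $|g(d)|\ll_j C_j^{\omega(d)}d^{1-j}\ll_{j,\varepsilon} d^{-2+\varepsilon}$. Thus both $\sum_d |g(d)|$ and $\sum_d |g(d)|/d$ converge.

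Next I would swap the sums:
\[
\sum_{\substack{\ell\le N\\ \ell\equiv j\,(2)}} f(\ell)=\sum_{\substack{d\le N\\ d \text{ odd}}} g(d)\,\#\{\ell\le N:\ d\mid \ell,\ \ell\equiv j \ (2)\}=\sum_{\substack{d\le N\\ d\text{ odd}}} g(d)\Bigl(\frac{N}{2d}+\mathcal{O}(1)\Bigr).
\]
The count is $N/(2d)+\mathcal{O}(1)$ because $d$ is odd, so the two congruence conditions combine into one condition modulo $2d$. Completing the $d$-sum to infinity costs $N\sum_{d>N}|g(d)|/d\ll_j N^{2-j+\varepsilon}\ll 1$. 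The $\mathcal{O}(1)$ terms contribute $\sum_{d\le N}|g(d)|\ll_j 1$. Both errors are well inside the claimed $\mathcal{O}_j(\log N)$; the weaker bound $\log N$ would even survive if one only used $|g(d)|\ll d^{-1}$. This gives
\[
\sum_{\ell\le N}\mathfrak{S}_j(\ell)=\frac{N}{2}\,\mathfrak{S}^{(j)}\prod_{p>2}\Bigl(1+\frac{g(p)}{p}\Bigr)+\mathcal{O}_j(1).
\]

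It remains to evaluate the constant, and here the Euler product telescopes. Put $a=(p-1)^j$ and $e=(-1)^j$. The factor of $\mathfrak{S}^{(j)}$ in \eqref{eq:C0j} is $1-e/a=(a-e)/a$. The factor above is $1+g(p)/p=1+e/(a-e)=a/(a-e)$. Their product is $1$ for each odd $p$, so only the prime $2$ survives. The main term is therefore $\tfrac N2$ times the $2$-adic factor of $\mathfrak{S}^{(j)}$, i.e. $\tfrac N2$ times the value of $\mathfrak{S}_j(\ell)$ at $p=2$ on the admissible parity class, read off from the first line of \eqref{eq:sigmaj}.

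I expect this bookkeeping at $p=2$ to be the main obstacle. One must check carefully how the factor $2$ in \eqref{eq:C0j} interacts with the local factor $1-(-1)^{j-1}$ or $1-(-1)^{j}$ at $2$ in \eqref{eq:sigmaj}, and with the density $1/2$ of the parity class. This determines whether the constant is $2$ with the paper's normalization. A heuristic check via $\sum_{\ell\le N}\psi_j(\ell)\sim N^{j}/j!$ suggests the natural average is $1$, so I would verify this normalization explicitly before asserting the value $C'=2$. The analytic part of the argument is routine; only this constant needs care.
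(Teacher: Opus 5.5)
Your analytic argument is correct and follows the same route as the paper: the proof of Lemma 17.4 of \cite{mon71}, writing $\mathfrak{S}_j(\ell)/\mathfrak{S}^{(j)}$ as $1*g$ with $g$ supported on odd squarefree integers, interchanging the sums and evaluating the Euler product. Your error term $\mathcal{O}_j(1)$ is sharper than the paper's $\mathcal{O}_j(\log N)$. Only the last line is missing, and it needs no further verification. The $2$-adic part of $\mathfrak{S}^{(j)}$ is the explicit factor $2$ in \eqref{eq:C0j}, which is the $p=2$ factor of \eqref{eq:sigmaj} on the class $\ell\equiv j\pmod 2$. The odd-prime factors cancel exactly as you showed, so your main term is $\frac N2\cdot 2=N$, i.e.\ your argument proves
\[
\frac1N\sum_{\ell\le N}\mathfrak{S}_j(\ell)=1+\mathcal{O}_j\Bigl(\frac1N\Bigr).
\]
So the lemma as stated, with constant $2$, is false, and you should say so rather than defer to the paper's normalization. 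There is no normalization freedom here, since both $\mathfrak{S}_j$ and the average are explicit. Your heuristic is a valid check. If $\sum_{\ell\le x}\mathfrak{S}_j(\ell)\sim cx$, partial summation with $\psi_j(\ell)\approx\mathfrak{S}_j(\ell)\ell^{j-1}/(j-1)!$ gives $G_{1,j}(N)\approx cN^j/j!$, which forces $c=1$. The same conclusion appears prime by prime. For odd $p$ the local factor has mean $\frac1p\bigl(1+\frac{(-1)^j}{(p-1)^{j-1}}\bigr)+\frac{p-1}{p}\bigl(1-\frac{(-1)^j}{(p-1)^j}\bigr)=1$. At $p=2$ the factor is $2$ on one parity class and $0$ on the other, so it also has mean $1$.

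The paper obtains $2$ exactly at the point you handled correctly. It asserts $\sum_{\ell\le N}\sum_{d\mid\ell}f_1(d)=\sum_{\ell\le N}\mathfrak{S}_j(\ell)/\mathfrak{S}^{(j)}$ with $\ell$ running over all integers up to $N$. But the identity $\sum_{d\mid\ell}f_1(d)=\mathfrak{S}_j(\ell)/\mathfrak{S}^{(j)}$ holds only for $\ell\equiv j\pmod 2$. For $\ell$ of the other parity, $\mathfrak{S}_j(\ell)=0$ while $\sum_{d\mid\ell}f_1(d)=\prod_{p\mid\ell,\,p>2}(1+f_1(p))>0$. Equivalently, the paper counts $[N/d]$ multiples of $d$ where the correct count is your $N/(2d)+\mathcal{O}(1)$, and this doubles the main term. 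Thus $2$ is the mean of $\mathfrak{S}_j$ over the admissible parity class, not over all $\ell\le N$. Separately, the factor $\mu^j$ in the paper's $f_1$ should be $\mu^2$; your $g$ is right there. The slip does not affect the rest of the paper, which only uses bounds like $\sum_{\ell\le x}\chi(\ell)(\mathfrak{S}_{j-1}(\ell)-C')\ll x$, valid for any constant $C'$.
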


\begin{proof}
Since, by $(2)$ of \cite{lan12}, we can express $\mathfrak{S}_{j}$ as in \eqref{eq:sigmaj},
we just focus on the case $j\equiv_{2} \ell$. Mimicking the proof of Lemma 17.4 of \cite{mon71}, for $r=1$, we introduce the function
\begin{equation}
\label{f_r}
f_{1}(\ell)=\begin{cases}
\mu^j (\ell)\prod_{p\mid \ell} \dfrac{(-1)^{j}p}{(p-1)^j - (-1)^j} & \text{if $(\ell, 2)=1$} \\
0 & \text{otherwise.}
\end{cases}
\end{equation}
Recalling Definition \eqref{eq:sigmaj} when $j\equiv \ell\bmod 2$, the goal is to prove that
\begin{align*}
\sum_{d\mid \ell}f_{1}(d) =\frac{\mathfrak{S}_{j}(\ell)}{\mathfrak{S}^{(j)}}.  
\end{align*}
In fact, in this way, we will get that
\begin{align*}
\sum_{\ell\le N} \sum_{d\mid \ell}f_{1}(d) =\sum_{\ell\le N}\frac{\mathfrak{S}_{j}(\ell)}{\mathfrak{S}^{(j)}},
\end{align*}
summations which will give us the desired estimate, via the function $f_{1}$. 
Exchanging the order of summation, we get
\begin{align*}
\sum_{\ell\le N}\sum_{d\mid \ell} f_{1}(d)&=\sum_{d\le N}f_{1}(d)\biggl[\frac{N}{d}\biggr]=N\sum_{d\le N}\frac{f_{1}(d)}{d} + \mathcal{O}\left(\sum_{d\le N}f_{1}(d)\right) \\ 
&=N\sum_{d\ge  1}\frac{f_{1}(d)}{d} + \mathcal{O}\left(N\sum_{d> N}\frac{f_{1}(d)}{d}\right) + \mathcal{O}\left(\sum_{d\le N}f_{1} (d)\right) \\  
&=N\sum_{d\ge 1}\frac{f_{1}(d)}{d}+\mathcal{O}(\log N).
\end{align*}
Now we notice that
\[
\sum_{d\mid \ell}f_{1}(d)=\prod_{i=0}^k \sum_{m=0}^{\alpha_{i}}f_{1}(p_{i}^m)=\prod_{i=0}^k(f_{1}(1)+f_{1}(p_{i})),
\]
since
\begin{align*}
f_{1}(p_{i}^m)&\ne 0 \iff p\mid p_{i}^m\, \wedge\, (p_{i}^m, 2)=1 \iff p_{i}=p\ne 2 \\    
&\text{and}\quad \mu(p_{i}^m)^j\ne 0 \iff m\in\{0,1\}.
\end{align*}
Thus, 
\[
\sum_{d\mid \ell}f_{1}(d)=\prod_{\substack{p\mid \ell \\ p\ne 2}}(f_{1}(1)+f_{1}(p))=\prod_{\substack{p\mid \ell \\ (p,2)=1}}\left(1+\frac{(-1)^j p}{(p-1)^{j}-(-1)^j} \right)=\frac{\mathfrak{S}_{j}(\ell)}{\mathfrak{S}^{(j)}}.
\]
Since $f_{1}(d)d^{-1}$ is a multiplicative function, we can apply Euler's product formula and, thanks to $\mu$'s properties, we have
\[
\sum_{d\ge 1}\frac{f_{1}(d)}{d}=\prod_{p\mid\ell}\left(1+\frac{f_{1}(p)}{p}\right)=\prod_{(p,2)=1}\left(1+\frac{(-1)^j}{(p-1)^{j}-(-1)^j} \right).
\]
To conclude, we have to show that
\[
\prod_{ (p,2)=1}\left(1+\frac{(-1)^j}{(p-1)^{j}-(-1)^j} \right)=\frac{2}{\mathfrak{S}^{(j)}},
\]
which follows by a simple computation. Summing up,
\begin{align*}
\sum_{\ell\le N}\frac{\mathfrak{S}_{j}(\ell)}{\mathfrak{S}^{(j)}}=\sum_{\ell\le N}\sum_{d\mid \ell} f_{1}(d)=N\sum_{d\ge 1}\frac{f_{1}(d)}{d}+ \mathcal{O}(\log N) =\frac{2N}{\mathfrak{S}^{(j)}} +\mathcal{O}(\log N),
\end{align*}
which corresponds to (17.11) of Lemma 17.4 of \cite{mon71}, for $r=1$ and $j=2$.
\end{proof}
\begin{remark}
Throughout the paper, we will adopt the notation $C':=2$, for the constant in the previous Lemma \ref{sec4:2}, since we have to add and subtract it several times.     
\end{remark}
\noindent The result below extends Lemma 2 of \cite{gol90} and Proposition of \cite{lan98}.
\begin{lemma}
    \label{sec4:3}
    For every $j\ge 3$, we have
\[
\frac{1}{N}\sum_{\ell\le N}\mathfrak{S}_{j}(\ell)^{2}=C''+\mathcal{O}\left(\frac{1}{N}\right),
\]
where
\[
C'':=2\prod_{p>2}\left( 1+\frac{1}{(p-1)^{2j-1}}\right).
\]
\end{lemma}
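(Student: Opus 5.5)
We follow the proof of Lemma \ref{sec4:2}, now applied to the square of the singular series. By \eqref{eq:sigmaj}, $\mathfrak{S}_{j}(\ell)=0$ unless $\ell\equiv j \bmod 2$. For $\ell\equiv j\bmod 2$ we have
\[
\mathfrak{S}_{j}(\ell)^2=(\mathfrak{S}^{(j)})^2\prod_{\substack{p\mid \ell\\ p>2}}(1+a_p)^2,\qquad a_p:=\frac{(-1)^{j}p}{(p-1)^{j}-(-1)^j}.
\]
We define the multiplicative function $g$, supported on odd squarefree integers, by
\[
g(p):=(1+a_p)^2-1=2a_p+a_p^2 \quad (p>2).
\]
Then $\sum_{d\mid \ell}g(d)=\mathfrak{S}_{j}(\ell)^2/(\mathfrak{S}^{(j)})^2$ for every $\ell\equiv j\bmod 2$. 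Since $a_p\ll p^{1-j}$, we get $g(p)\ll_j p^{1-j}\le p^{-2}$ for $j\ge 3$. Hence $\sum_{d\ge1}|g(d)|<\infty$. This is the key point that improves the error from $\log N$ in Lemma \ref{sec4:2} to $\mathcal{O}(1)$ here.

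Next we interchange summations:
\[
\sum_{\substack{\ell\le N\\ \ell\equiv j\,(2)}}\sum_{d\mid\ell}g(d)=\sum_{d\le N} g(d)\,\#\{\ell\le N:\ d\mid \ell,\ \ell\equiv j\bmod 2\}.
\]
Since $d$ is odd, the count equals $N/(2d)+\mathcal{O}(1)$. Therefore the sum is
\[
\frac N2\sum_{d\ge1}\frac{g(d)}{d}+\mathcal{O}\Bigl(N\sum_{d>N}\frac{|g(d)|}{d}\Bigr)+\mathcal{O}\Bigl(\sum_{d\le N}|g(d)|\Bigr).
\]
Both error terms are bounded by $\sum_{d\ge1}|g(d)|=\mathcal{O}_j(1)$; for the first, note $N/d<1$ when $d>N$. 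By the Euler product,
\[
\sum_{\ell\le N}\mathfrak{S}_{j}(\ell)^2=\frac{(\mathfrak{S}^{(j)})^2}{2}\,N\prod_{p>2}\Bigl(1+\frac{g(p)}{p}\Bigr)+\mathcal{O}_j(1).
\]

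It remains to identify the constant, which is a local computation at each odd prime. Write $b_p:=(-1)^{j+1}/(p-1)^{j}$, so that $\mathfrak{S}^{(j)}=2\prod_{p>2}(1+b_p)$. Write also $c_p:=(-1)^{j}/(p-1)^{j-1}$, so that $1+c_p$ is the factor of \eqref{eq:sigmaj} at primes $p\mid\ell$; one checks $(1+b_p)(1+a_p)=1+c_p$. Then the local factor is
\[
(1+b_p)^2\Bigl(1+\frac{g(p)}{p}\Bigr)=\Bigl(1-\frac1p\Bigr)(1+b_p)^2+\frac1p(1+c_p)^2 .
\]
In the expansion, the linear terms cancel because $\frac{p-1}{p}b_p=-\frac{c_p}{p}$. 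The quadratic terms give
\[
\frac{1}{p(p-1)^{2j-1}}+\frac{1}{p(p-1)^{2j-2}}=\frac{1}{(p-1)^{2j-1}} .
\]
Hence $\frac{(\mathfrak{S}^{(j)})^2}{2}\prod_{p>2}\bigl(1+\frac{g(p)}{p}\bigr)=2\prod_{p>2}\bigl(1+(p-1)^{1-2j}\bigr)=C''$. Dividing by $N$ gives the claim.

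The only delicate steps are two. The first is bookkeeping the prime $2$ and the parity restriction: $g$ is supported on odd $d$, which is what makes the count $N/(2d)+\mathcal{O}(1)$. The second is the absolute convergence of $\sum_d|g(d)|$, which is what yields the $\mathcal{O}(1/N)$ error rather than $\mathcal{O}(\log N/N)$. Both rely on $j\ge3$.
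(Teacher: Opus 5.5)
Your proof is correct and follows the paper's argument. Your $g$ is exactly the paper's $f$, with $g(p)=2f_1(p)+f_1(p)^2$, and the interchange of summation, the Euler product and the local identity $(1+b_p)^2(1+g(p)/p)=1+(p-1)^{1-2j}$ are the paper's steps; your $b_p,c_p$ bookkeeping is just a tidier way of doing the paper's algebra. The differences are only streamlining: you bound all error terms by the absolute convergence of $\sum_d |g(d)|$ (from $g(p)\ll_j p^{1-j}$) instead of the paper's bound $|f(m)|\le d(m)^{A'}/m^{j-1}$ with divisor-sum estimates and partial summation, and your count over $\ell\equiv j \pmod 2$ covers both parities of $j$, whereas the paper's $\sum_{2\ell\le N}$ literally handles only even $j$.
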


\begin{proof}
Recalling \eqref{eq:sigmaj} and \eqref{f_r}, for $\ell\equiv_{2}j$, we get
\begin{align*}
\mathfrak{S}_{j}(\ell)^2 =(\mathfrak{S}^{(j)})^2 \prod_{\substack{p\mid \ell \\ p>2}}\left(1+f_{1}(p)\right)^2 = (\mathfrak{S}^{(j)})^2 \prod_{\substack{p\mid \ell \\ p>2}} \left(1+g_{j}(p) \right),
\end{align*}
with
\begin{equation}
 \label{g-p}   
 g_{j}(p):=2f_{1}(p)+f_{1}(p)^2 =\frac{2(-1)^j p((p-1)^j -(-1)^j)+p^2}{((p-1)^j -(-1)^j)^2}
\end{equation}
say, for odd  $p$.
Thus
\begin{align*}
\sum_{\ell\le N} \mathfrak{S}_{j}(\ell)^2 &=(\mathfrak{S}^{(j)})^2 \sum_{2\ell\le N} \prod_{\substack{p\mid \ell \\ p>2}} (1+g_{j}(p)) 
=(\mathfrak{S}^{(j)})^2 \sum_{\ell\le N/2}\sum_{m\mid \ell} f(m),
\end{align*}
where we define 
\begin{align*}
f(m):=\begin{cases}
\mu^2 (m)\prod_{p\mid m}g_{j}(p) & \text{if $2\nmid m$} \\
0 & \text{if $2\mid m$}.
\end{cases}    
\end{align*}
Therefore, we can decompose the quadratic average as
\begin{align}
\notag
\sum_{\ell\le N} \mathfrak{S}_{j}(\ell)^2&=(\mathfrak{S}^{(j)})^{2}\sum_{m\le N/2} f(m)\biggl[\frac{N}{2m}\biggr]=\frac{(\mathfrak{S}^{(j)})^{2}}{2}\cdot N\sum_{m\ge 1}\frac{f(m)}{m} -\frac{(\mathfrak{S}^{(j)})^2}{2}\cdot N\sum_{m>N/2}\frac{f(m)}{m} \\ \label{sec4:4} &\qquad- \frac{(\mathfrak{S}^{(j)})^2}{2}\sum_{m\le N/2}f(m) -(\mathfrak{S}^{(j)})^{2}\sum_{m\le N/2} f(m)P\left( \frac{N}{2m}\right),
\end{align}
for $P(u):=u-[u]-1/2$.
The main term is given by the first summand, as
\begin{align*}
\frac{(\mathfrak{S}^{(j)})^2}{2}N\sum_{m\ge 1}\frac{f(m)}{m}&=2N\prod_{p>2}\left( 1-\frac{(-1)^j}{(p-1)^j}\right)^2 \left(1+\frac{g_{j}(p)}{p}\right) \\ &=2N\prod_{p>2} \frac{[(p-1)^{j}-(-1)^j][(p-1)^{j}-(-1)^j +2(-1)^j]+p}{(p-1)^{2j}} \\ 
&=2N\prod_{p>2}\frac{(p-1)^{2j}+(p-1)}{(p-1)^{2j}}=2N\prod_{p>2}\left(1+\frac{1}{(p-1)^{2j-1}} \right).
\end{align*}
For the third term, first of all we observe that $g_{j}(p)$ can be rewritten as
\[
g_{j}(p)=\frac{2p(-1)^j}{(p-1)^j -(-1)^j} + \frac{p^2}{((p-1)^j -(-1)^j)^2},
\]
which trivially implies that
\begin{align*}
|g_{j}(p)|\le \frac{2p}{(p-1)^j -1}+ \frac{p^2}{((p-1)^j -1)^2}  
\le \frac{3p}{(p-1)^j -1}
\end{align*}
and thus that
\begin{equation}
\label{ineq-g}
|g_{j}(p)\cdot p^{j-1}|\le \frac{3p^j}{(p-1)^j -1}.
\end{equation}
Now we observe that the right hand side of \eqref{ineq-g} is bounded by a suitable constant $B(j)>2$. In fact, if we set $x:=p-1$ and we introduce the function
\[
\delta(x)=\frac{(x+1)^j}{x^j -1},
\]
for $x\ge 2$, we get that $\delta$ is decreasing, since $\delta'(x)<0$.
Hence, we can choose $B(j)=\delta(2)$, so that $|g_{j}(p)|\le B(j)/p^{j-1}$, for all $p\ge 3$. Therefore we obtain 
\begin{align}
\notag
|f(m)|&\le \mu^2 (m)\prod_{p\mid m}|g_{j}(p)|\le \mu^2 (m)\prod_{p\mid m} \frac{B(j)}{p^{j-1}}\le\frac{B(j)^{\omega(m)}}{m^{j-1}} \\ \label{est-g} &=\frac{(2^{\omega(m)})^{\log_{2}(B(j))}}{m^{j-1}}\le\frac{d(m)^{\log_{2}(B(j))}}{m^{j-1}}.
\end{align}
Now, by Theorem 5.3 of \cite{keng82}, we know that, for a non-negative integer $A'$,
\begin{align}
\label{th-5.3}
\sum_{m\le U}d(m)^{A'} \ll U(\log U)^{2^{A'} -1}\,, \qquad \sum_{m\le U}\frac{d(m)^{A'}}{m}\ll (\log U)^{2^{A'}}. 
\end{align}
Thus, by partial summation and \eqref{th-5.3}, with $U:=N/2$ and $A'=\lceil\log_{2}(B(j)) \rceil$ we have
\begin{align*}
\sum_{m\le U} \frac{f(m)}{m^{j-1}}&\ll \sum_{m\le U}\frac{d(m)^{A'}}{m^{2}} =\frac{1}{U^2}\sum_{m\le U}d(m)^{A'} + 2\int_{1}^{U}\biggl(\sum_{m\le t}d(m)^{A'}\biggr)\frac{\dx t}{t^3}  \\
&\ll \frac{(\log U)^{2^{A'} -1}}{U} +\int_{1}^{U}\frac{(\log t)^{2^{A'} -1}}{t^2}\,\dx t \ll  1.
\end{align*}
\noindent Consequently, we also achieve the upper bound for the second term of \eqref{sec4:4}, as
\begin{align*}
\sum_{m\ge U} \frac{f(m)}{m} \ll \sum_{t\ge 0} \frac{1}{2^tU}\sum_{2^tU\le m<2^{t+1}U} f(m)\ll \frac{1}{U}.
\end{align*}
For the last term of \eqref{sec4:4}, the previous strategy for the second and the third summands works; in other terms, it holds that
\begin{equation}
\label{th-gol-fri}
\sum_{m\le U}f(m)P\left(\frac{U}{m}\right) \ll \sum_{m\le U}\frac{d(m)^{A'}}{m^{2}},
\end{equation}
for the non-negative integer $A'=\lceil \log_{2}(B(j))\rceil$ and, by partial summation, we obtain that
\[
\sum_{m\le U}f(m)P\left(\frac{U}{m}\right) \ll 1.
\]
\end{proof}



\bigskip

  AM: Dipartimento di Matematica e Informatica,
  Universit\`a degli Studi di Ferrara, Ferrara, Italy.
  email: \texttt{alessand.migliaccio@edu.unife.it}.
  AZ: [Corresponding Author]
  Dipartimento di Scienze Matematiche, Fisiche e Informatiche,
  Universit\`a di Parma, Parco Area delle Scienze, 53/a, 43124 Parma, Italy
  email: \texttt{alessandro.zaccagnini@unipr.it}

\end{document}